\newtheorem{thmx}{Theorem}
\newcommand{\bunderbrace}[2]{%
	\begin{array}[t]{@{}c@{}}
		\underbrace{#1}\\
		#2
	\end{array}
}
\tikzset{double line with arrow/.style args={#1,#2}{decorate,decoration={markings,%
			mark=at position 0 with {\coordinate (ta-base-1) at (0,1pt);
				\coordinate (ta-base-2) at (0,-1pt);},
			mark=at position 1 with {\draw[#1] (ta-base-1) -- (0,1pt);
				\draw[#2] (ta-base-2) -- (0,-1pt);
}}}}
\newtheorem{theorem}{Theorem}
\newtheorem*{theorem*}{Theorem}
\newtheorem{proposition}[theorem]{Proposition}
\theoremstyle{definition}
\newtheorem{definition}[theorem]{Definition}
\newtheorem{example}[theorem]{Example}
\newtheorem*{question*}{Question}
\newcommand{\sll}[1]{\mkern-4mu\mathbin{/\mkern-5mu/}_{\mkern-4mu{#1}}}
\newcommand{\g}{\mathfrak{g}}
\newcommand{\h}{\mathfrak{h}}
\newcommand{\p}{\mathfrak{p}}
\renewcommand{\exp}{\mathrm{exp}}
\numberwithin{equation}{section}
\title[Universal families of twisted cotangent bundles]{Universal families of twisted cotangent bundles}
\author[Peter Crooks]{Peter Crooks}
\address[Peter Crooks]{Department of Mathematics and Statistics\\ Utah State University \\ 3900 Old Main Hill \\ Logan, UT 84322, USA}
\email{peter.crooks@usu.edu}
\subjclass{53D17 (primary); 14L40, 14J42, 53D20 (secondary)}
\keywords{Twisted cotangent bundle, universal family, symplectic quotient}
\begin{document}
	
	\begin{abstract} 
		We prove several results in incidence-theoretic Lie theory and Poisson geometry, with connections to twisted cotangent bundles, symmetric varieties, and Grothendieck--Springer resolutions. Our inquiry begins with a complex algebraic group $G$ and complex $G$-variety $X$. One can study the \textit{affine Hamiltonian Lagrangian (AHL) $G$-bundles} over $X$. Lisiecki indexes the isomorphism classes of such bundles in the case of a homogeneous $G$-variety $X=G/H$; the indexing set  is the set of $H$-fixed points $(\h^*)^H\subset\h^*$, where $\h$ is the Lie algebra of $H$. In very rough terms, one may regard $\psi\in(\mathfrak{h}^*)^H$ as labeling the isomorphism class of a \textit{$\psi$-twisted cotangent bundle} of $G/H$. These twisted cotangent bundles feature prominently in geometric representation theory and symplectic geometry.
		
		We introduce and examine the notion of a \textit{universal family} of AHL $G$-bundles over a $G$-variety $X$, as part of a broader program on Lie-theoretic and incidence-theoretic constructions of regular Poisson varieties. This family is defined to be a flat family $\pi:\mathcal{U}\longrightarrow Y$, in which $\mathcal{U}$ is a Poisson variety, the fibers of $\pi$ form a  complete list of representatives of the isomorphism classes of AHL $G$-bundles over $X$, and other pertinent properties are satisfied. Our first main result is the construction of a universal family of AHL $G$-bundles over a homogeneous base $X=G/H$, for connected $H$. In our second main result, we take $X$ to be a conjugacy class $\mathcal{C}$ of self-normalizing closed subgroups of $G$. We associate to $\mathcal{C}$ a regular Poisson variety $\mathcal{U}_{\mathcal{C}}$, defined in incidence-theoretic terms. Attention is paid to the case of conjugacy classes of normalizers of symmetric subgroups. In the case of a connected semisimple group $G$ and conjugacy class $\mathcal{C}$ of parabolic subgroups, our third main result relates $\mathcal{U}_{\mathcal{C}}$ to the partial Grothendieck--Springer resolution for $\mathcal{C}$. 
	\end{abstract}
	
	\maketitle
	\begin{scriptsize}
		\setcounter{secnumdepth}{2}
		\setcounter{tocdepth}{2}
		%\tableofcontents
	\end{scriptsize}
	
	\section{Introduction}
	We begin with three general themes around which this manuscript is centered.
	\subsection{Theme 1: Incidence varieties in Poisson geometry}\label{Subsection: Context and motivation}
	Several interesting and well-studied Poisson varieties are realizable in incidence-theoretic terms \cite{EvensLu1,EvensLu2,EvensLu3,Chriss,Kamnitzer,YakimovBrown,YakimovBrownII,Colarusso,EvensLi,CM,Leslie,Schrader,Safronov}. Prominent examples include the partial Grothendieck--Springer resolutions for a connected complex semisimple affine algebraic group with Lie algebra $\g$. One first associates the incidence variety \begin{equation}\mathfrak{g}_{\mathcal{C}}\coloneqq\{(\p,x)\in\mathcal{C}\times\g:x\in\p\}\end{equation} to each conjugacy class $\mathcal{C}$ of parabolic subalgebras of $\g$. Projection to $\g$ defines a morphism $\mu_{\mathcal{C}}:\mathfrak{g}_{\mathcal{C}}\longrightarrow\g$, called the partial Grothendieck--Springer resolution for $\mathcal{C}$. There exists a unique Poisson Hamiltonian $G$-variety structure on $\g_{\mathcal{C}}$ for which $\mu_{\mathcal{C}}$ is a moment map \cite[Proposition 5.3]{CM}. A more concrete description of this structure can be obtained by choosing a parabolic subgroup $P\subset G$ with Lie algebra in $\mathcal{C}$. The cotangent bundle $T^*(G/U(P))$ is then a Hamiltonian $G\times L(P)$-variety, where $U(P)$ is the unipotent radical of $P$ and $L(P)\coloneqq P/U(P)$. One has $\g_{\mathcal{C}}\cong T^*(G/U(P))/L(P)$ as Poisson Hamiltonian $G$-varieties. In short, the Hamiltonian $G$-variety structure on $T^*(G/U(P))$ naturally induces such a structure on $\g_{\mathcal{C}}$.
	
	\subsection{Theme 2: Twisted cotangent bundles of $G/U$ and regular Poisson varieties}\label{Subsection: Twisted cotangent 1}
	Let us specialize to the case in which $\mathcal{C}=\mathcal{B}$ is the flag variety of all Borel subalgebras of $\g$. Write $U(P)=U$ and $L(P)=T$ in the notation above. The group $T$ is a torus of dimension equal to the rank of $G$. This fact implies that the Poisson variety $\g_{\mathcal{B}}\cong T^*(G/U)/T$ is regular, meaning that any two of its symplectic leaves have the same dimension. The map $\mu_{\mathcal{B}}:\g_{\mathcal{B}}\longrightarrow\g$ is called the full Grothendieck--Springer resolution of $\g$.
	
	In addition to inducing the full Grothendieck--Springer resolution, $T^*(G/U)$ features in the literature on twisted cotangent bundles and geometric representation theory. One may form the twisted cotangent bundle $T^*(G/U)^{\psi}$ for $\psi$ a regular central character of $\mathfrak{u}$, the Lie algebra of $U$. We refer the reader to \cite{GinzburgKazhdan} for a more detailed discussion of the twisted cotangent bundles of $G/U$.
	
	\subsection{Theme 3: Twisted cotangent bundles of homogeneous varieties}\label{Subsection: Twisted cotangent 2}
	Suppose that a complex algebraic group $G$ acts algebraically on a smooth complex algebraic variety $X$. This action has a canonical lift to a Hamiltonian $G$-action on the cotangent bundle $T^*X$. At the same time, the bundle projection $T^*X\longrightarrow X$ is an affine Lagrangian fibration. The $G$-action on $T^*X$ is compatible with this Lagrangian fibration in the sense that the fibration is $G$-equivariant. Lisiecki \cite{Lisiecki} defines an \textit{affine Hamiltonian Lagrangian (AHL) $G$-bundle} over $X$ to be the following slight generalization of this setup: a symplectic Hamiltonian $G$-variety $Y$, together with a $G$-equivariant affine Lagrangian fibration $Y\longrightarrow X$. This notion features in several recent articles \cite{Torres1,Torres2}.
	
	It is natural to consider the set $\mathcal{A}\mathcal{H}\mathcal{L}_G(X)$ of isomorphism classes of AHL $G$-bundles over $X$. Lisiecki gives a concrete description of this set in the case of a homogeneous $G$-variety $X=G/H$, where $H\subset G$ is a closed subgroup with Lie algebra $\h$. One first considers the subspace $(\h^*)^H\subset\h^*$ of fixed points of the coadjoint representation of $H$. Each $\psi\in(\h^*)^H$ determines a \textit{$\psi$-twisted cotangent bundle} of $G/H$: this is defined to be the symplectic quotient variety $T^*G\sll{\psi}H$, taken with respect to the right translation action of $H$ on $T^*G$. The left translation action of $G$ on $T^*G$ then yields a residual Hamiltonian $G$-variety structure on $T^*G\sll{\psi}H$. Another fact is that the cotangent bundle map $T^*G\longrightarrow G$ induces a $G$-equivariant affine Lagrangian fibration $T^*G\sll{\psi}H\longrightarrow G/H$. In particular, $T^*G\sll{\psi}H$ is an AHL $G$-bundle over $G/H$. Lisiecki proves that the map
	\begin{equation}(\h^*)^H\longrightarrow\mathcal{A}\mathcal{H}\mathcal{L}_G(G/H),\quad\psi\mapsto[T^*G\sll{\psi}H]\end{equation} is a bijection.
	
	The discussion of $\mathcal{A}\mathcal{H}\mathcal{L}_G(X)$ motivates one to consider AHL $G$-bundles in \textit{universal families}. A universal family of AHL $G$-bundles over $X$ should arguably be defined as a flat, surjective morphism $\pi:\mathcal{U}\longrightarrow\mathcal{A}\mathcal{H}\mathcal{L}_G(X)$, where some appropriate algebraic variety structure on $\mathcal{A}\mathcal{H}\mathcal{L}_G(X)$ has been chosen. These data should come equipped with some Poisson-geometric features to make sense of $\pi^{-1}(\psi)$ being an AHL $G$-bundle for all $\psi\in\mathcal{A}\mathcal{H}\mathcal{L}_G(X)$. With this in mind, we require $\mathcal{U}$ to be a smooth Poisson variety carrying a Hamiltonian action of $G$ and moment map $\mu:\mathcal{U}\longrightarrow\g^*$. The morphism $\pi$ is also required to be $G$-invariant, and have the symplectic leaves of $\mathcal{U}$ as its fibers; this ensures that each fiber $\pi^{-1}(\psi)$ is a symplectic Hamiltonian $G$-variety with moment map $\mu\big\vert_{\pi^{-1}(\psi)}$. Other requirements are for $\mathcal{U}$ to come equipped with a $G$-equivariant morphism $\sigma:\mathcal{U}\longrightarrow X$, and for $\sigma\big\vert_{\pi^{-1}(\psi)}$ to give the Hamiltonian $G$-variety $\pi^{-1}(\psi)$ the structure of an AHL $G$-bundle for all $\psi\in\mathcal{A}\mathcal{H}\mathcal{L}_G(X)$. A final requirement is that this AHL $G$-bundle represent the isomorphism class $\psi\in\mathcal{A}\mathcal{H}\mathcal{L}_G(X)$. We then call the datum $(\mathcal{U},\mu,\pi,\sigma)$ a \textit{universal family} of AHL $G$-bundles over $X$.
	
	\subsection{Purpose} The purpose of this manuscript is to unify the themes mentioned above. We seek a uniform framework for studying the twisted cotangent bundles of homogeneous varieties, and regular Poisson varieties defined in Lie-theoretic and incidence-theoretic terms. Our hope is to stimulate new connections to Lie theory, especially those concerned with symmetric varieties. 
	
	\subsection{Main results}
	We construct a universal family of AHL $G$-bundles over $G/H$ for a connected closed subgroup $H\subset G$. Our approach is shown to yield a regular Poisson Hamiltonian $G$-variety $\mathcal{U}_{\mathcal{C}}$, where $\mathcal{C}$ is any conjugacy class of self-normalizing closed subgroups of $G$. This variety is defined in purely incidence-theoretic terms, and shown to make connections to symmetric subgroups of $G$. In the case of a connected semisimple $G$ and conjugacy class $\mathcal{C}$ of parabolic subgroups, we relate $\mathcal{U}_{\mathcal{C}}$ to the partial Grothendieck--Springer resolution for $\mathcal{C}$. The following is a more detailed overview.
	
	\subsubsection{A universal family of AHL $G$-bundles over $G/H$} Let $G$ be a complex algebraic group with Lie algebra $\g$. Suppose that $H\subset G$ is a closed subgroup with Lie algebra $\h\subset\g$. Note that the abelianization $A(H)\coloneqq H/[H,H]$ has Lie algebra $\mathfrak{a}(\h)\coloneqq\h/[\h,\h]$. At the same time, $T^*(G/[H,H])$ is a Hamiltonian $(G\times A(H))$-variety. The geometric quotient \begin{equation}\mathcal{U}_{G/H}\coloneqq T^*(G/[H,H])/A(H)\end{equation} exists, is smooth, and inherits a regular Poisson structure from the symplectic structure on $T^*(G/[H,H])$. It also inherits a residual Poisson Hamiltonian $G$-variety structure; the moment map $\mu_{G/H}:\mathcal{U}_{G/H}\longrightarrow\g^*$ is obtained by descending the $G$-moment map on $T^*(G/[H,H])$ to $\mathcal{U}_{G/H}$. The $A(H)$-moment map $T^*(G/[H,H])\longrightarrow\mathfrak{a}(\h)^*$ and cotangent bundle projection $T^*(G/[H,H])\longrightarrow G/[H,H]$ also descend to morphisms $\pi_{G/H}:\mathcal{U}_{G/H}\longrightarrow\mathfrak{a}(\h)^*$ and $\sigma_{G/H}:\mathcal{U}_{G/H}\longrightarrow G/H$, respectively. With these considerations in mind, the following is our first main result.
	
	\begin{thmx}\label{Theorem: Main1}
		If $H$ is connected, then $(\mathcal{U}_{G/H},\mu_{G/H},\pi_{G/H},\sigma_{G/H})$ is a universal family of AHL $G$-bundles over $G/H$.
	\end{thmx}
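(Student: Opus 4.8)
The plan is to verify the defining axioms of a universal family one at a time, leaning on Proposition~\ref{Proposition: Prelim} for the Poisson and Hamiltonian structure on $\mathcal{U}_{G/H}$, and on Lisiecki's classification for the final "completeness" and "represents the class $\psi$" requirements. Concretely, I would organize the argument around the diagram
\[
T^*(G/[H,H]) \longrightarrow \mathcal{U}_{G/H} \xrightarrow{\ \pi_{G/H}\ } \mathfrak{a}(\h)^*,
\]
together with the descended maps $\mu_{G/H}$ and $\sigma_{G/H}$. Proposition~\ref{Proposition: Prelim} already supplies: smoothness of $\mathcal{U}_{G/H}$, its Poisson structure, the residual Hamiltonian $G$-action, and the moment map $\mu_{G/H}$. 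So the remaining work is (a) flatness and surjectivity of $\pi_{G/H}$ onto $\mathfrak{a}(\h)^*\cong(\h^*)^H$ (here using that $H$ connected forces $(\h^*)^H$ to coincide with the annihilator of $[\h,\h]$, i.e. with $\mathfrak{a}(\h)^*$); (b) identification of the fibers of $\pi_{G/H}$ with the symplectic leaves of $\mathcal{U}_{G/H}$; and (c) the statement that the fiber over $\psi$, equipped with $\mu_{G/H}$ and $\sigma_{G/H}$ restricted to it, is an AHL $G$-bundle over $G/H$ isomorphic to the $\psi$-twisted cotangent bundle $T^*G\sll{\psi}H$.

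For (b) and (c) the key computation is a \emph{reduction in stages} identification. Since $[H,H]$ is normal in $H$ and $A(H)=H/[H,H]$ is abelian (hence acts on $T^*(G/[H,H])$ trivially on the coadjoint side), one has for each $\psi\in\mathfrak{a}(\h)^*$ a canonical isomorphism of Hamiltonian $G$-varieties
\[
\pi_{G/H}^{-1}(\psi)\;=\;\bigl(T^*(G/[H,H])\bigr)\sll{\psi}A(H)\;\cong\;T^*G\sll{\widetilde\psi}H,
\]
where $\widetilde\psi\in(\h^*)^H$ is the pullback of $\psi$ along $\h\twoheadrightarrow\mathfrak{a}(\h)$; this is precisely the statement that reducing $T^*G$ by $[H,H]$ and then by $A(H)$ at level $\psi$ agrees with reducing $T^*G$ by $H$ at level $\widetilde\psi$. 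From this isomorphism, the Lagrangian fibration $\sigma_{G/H}\big\vert_{\pi_{G/H}^{-1}(\psi)}:\pi_{G/H}^{-1}(\psi)\to G/H$ is identified with the one Lisiecki attaches to $T^*G\sll{\widetilde\psi}H$, so it is indeed a $G$-equivariant affine Lagrangian fibration and represents the isomorphism class $\widetilde\psi$. That each $\pi_{G/H}^{-1}(\psi)$ is a single symplectic leaf follows because $T^*G\sll{\widetilde\psi}H$ is symplectic (a smooth symplectic quotient of a symplectic manifold by a free action), and conversely every symplectic leaf of $\mathcal{U}_{G/H}$ is contained in a fiber of $\pi_{G/H}$ because $\pi_{G/H}$ is the $A(H)$-moment map descended, hence Poisson with central target—so its components of the level sets are unions of leaves; combining the two inclusions gives equality.

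\textbf{Flatness} and the bijectivity of $\psi\mapsto[\pi_{G/H}^{-1}(\psi)]$ are then the two remaining points. Flatness of $\pi_{G/H}$ I would get from the fact that it is (the descent of) a moment map for a free abelian action, whose fibers are all smooth of the same dimension over a smooth base, so by "miracle flatness" ($\mathcal{U}_{G/H}$ smooth, $\mathfrak{a}(\h)^*$ smooth, equidimensional fibers) $\pi_{G/H}$ is flat. The bijectivity of the induced map on isomorphism classes is immediate from Lisiecki's theorem once we know, via the reduction-in-stages isomorphism above, that $\psi\mapsto[\pi_{G/H}^{-1}(\psi)]$ is the composite of the linear isomorphism $\mathfrak{a}(\h)^*\xrightarrow{\ \sim\ }(\h^*)^H$ (valid because $H$ is connected) with Lisiecki's bijection $(\h^*)^H\xrightarrow{\ \sim\ }\mathcal{A}\mathcal{H}\mathcal{L}_G(G/H)$.

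\textbf{The main obstacle} I anticipate is (b)+(c): setting up the reduction-in-stages isomorphism $T^*(G/[H,H])\sll{\psi}A(H)\cong T^*G\sll{\widetilde\psi}H$ carefully enough that it is an isomorphism of AHL $G$-bundles over $G/H$—i.e., compatible with the Lagrangian fibrations and the $G$-moment maps, not merely an abstract symplectomorphism—and checking that the descended map $\sigma_{G/H}$ really does restrict to Lisiecki's Lagrangian fibration on each leaf. The connectedness hypothesis on $H$ enters exactly to guarantee $(\h^*)^H=\mathfrak{a}(\h)^*$; without it the family $\pi_{G/H}$ over $\mathfrak{a}(\h)^*$ would still make sense but would fail to be universal, since it would only see the isomorphism classes indexed by the $H^\circ$-fixed locus. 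The rest—smoothness, the Poisson structure, the $G$-action and its moment map—is delegated to Proposition~\ref{Proposition: Prelim}, and flatness is the cheap "miracle flatness" observation above.
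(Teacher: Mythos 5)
Your proposal is correct and follows essentially the same route as the paper: Proposition \ref{Proposition: Prelim} supplies the Poisson Hamiltonian structure and the flat, surjective, $G$-invariant $\pi_{G/H}$ (via the same miracle-flatness argument), reduction in stages identifies each fiber $\pi_{G/H}^{-1}(\psi)=\left(T^*G\sll{0}[H,H]\right)\sll{\psi}A(H)$ with $T^*G\sll{\psi}H$ as an AHL $G$-bundle, and Lisiecki's bijection together with $(\h^*)^H=\mathfrak{a}(\h)^*$ (from connectedness of $H$) gives universality. The only point you elide is why a fiber is a \emph{single} symplectic leaf rather than a disjoint union of leaves; the paper closes this by observing explicitly that $\pi_{G/H}^{-1}(\psi)=G\times_H\mathrm{r}^{-1}(-\psi)$ is connected.
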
 
	
	This result appears in the main text as Theorem \ref{Theorem: Universal}.
	
	\subsubsection{The regular Poisson varieties $\mathcal{U}_{\mathcal{C}}$} A conjugacy class $\mathcal{C}$ of closed subgroups of $G$ is called \textit{self-normalizing} if $N_G(H)=H$ for all $H\in\mathcal{C}$. In this case, we explain that $\mathcal{C}$ carries a distinguished algebraic variety structure (Proposition \ref{Proposition: Canonical structure on conjugacy classes}). We also form the $G$-invariant subset \begin{equation}\mathcal{U}_{\mathcal{C}}\coloneqq\{(H,\xi)\in\mathcal{C}\times\g^*:\xi\in[\h,\h]^{\circ}\}\subset\mathcal{C}\times\g^*,\end{equation} where $\h\subset\g$ is the Lie algebra of $H$ and $[\h,\h]^{\circ}\subset\g^*$ is the annihilator of $[\h,\h]\subset\g$. Given $H\in\mathcal{C}$, one has a canonical, $G$-equivariant bijection
	\begin{equation}\label{Equation: Canonical bijection}\mathcal{U}_{G/H}\overset{\cong}\longrightarrow\mathcal{U}_{\mathcal{C}}.\end{equation} The following is a coarse summary of Theorem \ref{Proposition: New}.
	
	\begin{thmx}\label{Theorem: Main2}
		Let $\mathcal{C}$ be a self-normalizing conjugacy class of closed subgroups of $G$. There exists a unique regular Poisson Hamiltonian $G$-variety structure on $\mathcal{U}_{\mathcal{C}}$ such that \eqref{Equation: Canonical bijection} is an isomorphism of Poisson Hamiltonian $G$-varieties for all $H\in\mathcal{C}$.
	\end{thmx}
	
	In the interest of applying Theorem \ref{Theorem: Main2}, we seek self-normalizing, closed subgroups of $G$. This task is simplified by assuming $G$ to be connected and semisimple. Parabolic subgroups are then self-normalizing; see Section \ref{Section: Twisted} for applications of Theorem \ref{Theorem: Main2} in this case. We also discuss the case of normalizers of symmetric subgroups; see Section \ref{Section: Symmetric}.
	
	\subsubsection{The case of parabolic subgroups}\label{Subsection: Parabolic case} Suppose that $G$ is connected and semisimple. Let $\mathcal{C}$ be a conjugacy class of parabolic subgroups of $G$. We may realize $\mathcal{U}_{\mathcal{C}}$ as \begin{equation}\mathcal{U}_{\mathcal{C}}=\{(P,x)\in\mathcal{C}\times\g:x\in[\p,\p]^{\perp}\},\end{equation} where $\p\subset\g$ is the Lie algebra of $P$ and $[\p,\p]^{\perp}\subset\g$ is the annihilator of $[\p,\p]$ under the Killing form. At the same time, one has the partial Grothendieck--Springer resolution \begin{equation}\mathfrak{g}_{\mathcal{C}}\coloneqq\{(P,x)\in\mathcal{C}\times\g:x\in\p\}.\end{equation} There is a set-theoretic inclusion $\mathcal{U}_{\mathcal{C}}\subset\g_{\mathcal{C}}$. The following appears in the main text as Theorem \ref{Proposition: Embedding}.
	
	\begin{thmx}\label{Theorem: Main3}
		The inclusion $\mathcal{U}_{\mathcal{C}}\subset\g_{\mathcal{C}}$ is one of Poisson Hamiltonian $G$-varieties.
	\end{thmx}
	
	\subsection{Organization} Each section begins with a summary of its contents. Sections \ref{Section: Lie-theoretic}, \ref{Section: Incidence varieties}, and \ref{Section: Twisted} are largely devoted to the proofs of Theorems \ref{Theorem: Main1}, \ref{Theorem: Main2}, and \ref{Theorem: Main3}, respectively. In Section \ref{Section: Symmetric}, we address the case of conjugacy classes of normalizers of symmetric subgroups.
	
	\subsection*{Acknowledgements} Gratitude is extended to an anonymous referee for carefully and constructively reviewing this manuscript. The author also thanks Michel Brion and Maxence Mayrand for useful conversations. He was supported by the National Science Foundation grant DMS-2454103 and Simons Foundation grant MP-TSM-00002292.
	
	\section{Lie-theoretic families of AHL $G$-bundles}\label{Section: Lie-theoretic}
	
	This section is concerned with the definition, existence, and properties of universal families of AHL $G$-bundles over a homogeneous base. In Subsection \ref{Subsection: Fundamental conventions}, we summarize the algebraic, geometric, and Lie-theoretic conventions used in our paper. Subsection \ref{Subsection: AHL bundles} subsequently provides a brief overview of AHL $G$-bundles \cite{Lisiecki}, and includes the formal definition of a universal family of AHL $G$-bundles over a $G$-variety. Lisiecki's description of $\mathcal{A}\mathcal{H}\mathcal{L}_G(G/H)$ is then reviewed in Subsection \ref{Subsection: A Lie-theoretic class}. The proof and basic setup of Theorem \ref{Theorem: Main1} constitute Subsection \ref{Subsection: Some universal families}.
	
	\subsection{Fundamental conventions}\label{Subsection: Fundamental conventions}
	Let us briefly outline the seminal conventions observed in our paper. The sole purpose of this outline is to preempt any possible ambiguity in subsequent parts of the paper. 
	
	\subsubsection*{Poisson varieties} We work exclusively over the field $\mathbb{C}$ of complex numbers. The term \textit{algebraic variety} refers to a reduced scheme $(X,\mathcal{O}_X)$ of finite type over $\mathbb{C}$; irreducibility is not assumed. This variety is called \textit{smooth} if $\dim T_{x_1}X=\dim T_{x_2}X$ for all $x_1,x_2\in X$. An algebraic bivector field $\sigma\in H^0(X,\wedge^2(TX))$ on a smooth algebraic variety $(X,\mathcal{O}_X)$ is called \textit{Poisson} if the bracket \begin{equation}\{\cdot,\cdot\}:\mathcal{O}_X\times\mathcal{O}_X\longrightarrow\mathcal{O}_X,\quad\{f_1,f_2\}\coloneqq \sigma(\mathrm{d}f_1\wedge\mathrm{d}f_2)\end{equation} renders $\mathcal{O}_X$ a sheaf of Poisson algebras. A \textit{Poisson variety} is understood to be a pair $(X,\sigma)$, consisting of a smooth algebraic variety $X$ and a Poisson bivector field $\sigma$ on $X$. 
	
	\subsubsection{Symplectic varieties} Given a Poisson variety $(X,\sigma)$, let us consider the vector bundle morphism
	\begin{equation}\sigma^{\vee}:T^*X\longrightarrow TX,\quad (x,\phi)\mapsto\sigma_x(\phi,\cdot)\in T_xX,\quad x\in X,\text{ }\phi\in T_x^*X.\end{equation} This Poisson variety is called \textit{symplectic} if $\sigma^{\vee}$ is a vector bundle isomorphism. One then has a unique closed, non-degenerate, algebraic two-form $\omega\in H^0(X,\wedge^2(T^*X))$ satisfying $(\sigma^{\vee})^{-1}=\omega^{\vee}$, where $\omega^{\vee}$ is defined by \begin{equation}\omega^{\vee}:TX\longrightarrow T^*X,\quad (x,v)\mapsto\omega_x(v,\cdot)\in T_x^*X,\quad x\in X,\text{ }v\in T_xX.\end{equation} A \textit{symplectic variety} is thereby a pair $(X,\omega)$ of a smooth algebraic variety $X$ and a closed, non-degenerate, algebraic two-form $\omega\in H^0(X,\wedge^2(T^*X))$, i.e. a \textit{symplectic form}. A closed subvariety $L\subset X$ is then called \textit{Lagrangian} if $L$ is smooth, $\dim L=\frac{1}{2}\dim X$, and $L$ satisfies $\iota^*\omega=0$ for the inclusion morphism $\iota:L\longrightarrow X$. 
	
	\subsubsection{Symplectic leaves} Each Poisson variety $(X,\sigma)$ determines a (possibly singular) holomorphic distribution $\{\mathrm{Image}(\sigma^{\vee}_x)\subset T_xX\}_{x\in X}$; its leaves are precisely the \textit{symplectic leaves} of $(X,\sigma)$. In other words, a symplectic leaf is a maximal injectively immersed, connected, complex submanifold $Y\subset X$ satisfying $T_xY=\mathrm{Image}(\sigma^{\vee}_x)$ for all $x\in Y$. The Poisson bivector field $\sigma$ is then tangent to $Y$; it thereby endows $Y$ with a holomorphic symplectic structure. 
	
	The \textit{rank} of $(X,\sigma)$ is the maximum of the symplectic leaf dimensions. This Poisson variety is called \textit{regular} if any two symplectic leaves have the same dimension.   
	
	\subsubsection{Algebraic groups and actions} Let $G$ be an affine algebraic group with Lie algebra $\g$. The nilpotent radical and derived subalgebra of $\g$ are denoted by $\mathfrak{u}(\mathfrak{g})\subset\mathfrak{g}$ and $[\mathfrak{g},\mathfrak{g}]\subset\g$, respectively. Let us also write $\mathfrak{a}(\mathfrak{g})\coloneqq\mathfrak{g}/[\mathfrak{g},\mathfrak{g}]$ for the abelianization of $\mathfrak{g}$. Analogous conventions are adopted on the group level: the unipotent radical, derived subgroup, and abelianization of $G$ are denoted by $U(G)\subset G$, $[G,G]\subset G$, and $A(G)\coloneqq G/[G,G]$, respectively.  
	
	We write
	\begin{equation}\mathrm{Ad}:G\longrightarrow\operatorname{GL}(\g)\quad\text{and}\quad\mathrm{Ad}^*:G\longrightarrow\operatorname{GL}(\g^*)\end{equation} for the adjoint and coadjoint representations of $G$, respectively. The adjoint and coadjoint representations of $\g$ are denoted by \begin{equation}\mathrm{ad}:\g\longrightarrow\mathfrak{gl}(\g)\quad\text{and}\quad\mathrm{ad}^*:\g\longrightarrow\mathfrak{gl}(\g^*),\end{equation} respectively. Let us also recall that the Killing form $\langle\cdot,\cdot\rangle:\g\otimes_{\mathbb{C}}\g\longrightarrow\mathbb{C}$ is $G$-invariant and defined by \begin{equation}\langle x,y\rangle=\mathrm{trace}(\mathrm{ad}_{x}\circ\mathrm{ad}_{y})\end{equation} for all $x,y\in\g$. Given a vector subspace $V\subset\g$, we write \begin{equation}V^{\perp}\coloneqq\{x\in\g:\langle x,y\rangle=0\text{ for all }y\in V\}\end{equation} for the annihilator of $V$ under the Killing form. The annihilator of $V$ in $\g^*$ is denoted by $V^{\circ}\subset\g^*$.
	
	All group actions are understood to be left group actions. The term \textit{$G$-variety} refers to a smooth algebraic variety $X$ carrying an algebraic $G$-action\footnote{While this smoothness requirement is non-standard, it greatly simplifies our exposition on forthcoming topics.}. Many examples arise through the \textit{associated bundle} construction. In more detail, let $H\subset G$ be a closed subgroup, and let $Y$ be an $H$-variety. Consider the algebraic action of $G\times H$ on $G\times Y$ given by \begin{equation}(g,h)\cdot(k,y)\coloneqq (gkh^{-1},h\cdot y),\quad (g,h)\in G\times H,\text{ }(k,y)\in G\times Y.\end{equation} The action of $H=\{e\}\times H\subset G\times H$ admits a smooth geometric quotient \begin{equation}G\times_H Y\coloneqq(G\times Y)/H;\end{equation} it carries a residual action of $G=G\times\{e\}\subset G\times H$. Projection from the first factor defines a $G$-equivariant morphism $G\times_H Y\longrightarrow G/H$, called the \textit{associated bundle} for $G$, $H$, and $Y$. 
	
	Suppose that $Y$ is a finite-dimensional $H$-module $V$. The associated bundle $G\times_H V\longrightarrow G/H$ is then a $G$-equivariant algebraic vector bundle of rank equal to $\dim V$. On the other hand, let $\mathcal{E}\longrightarrow G/H$ be any $G$-equivariant algebraic vector bundle with fiber over $[e]\in G/H$ equal to the $H$-module $V$. It follows that \begin{equation}G\times_H V\longrightarrow\mathcal{E},\quad [g:v]\mapsto g\cdot v\end{equation} is an isomorphism of $G$-equivariant algebraic vector bundles over $G/H$.
	
	\subsubsection{Algebraic Hamiltonian actions} Let $G$ be an affine algebraic group with Lie algebra $\g$ and exponential map $\exp:\g\longrightarrow G$. Suppose that $X$ is a $G$-variety. Each $\xi\in\g$ determines a \textit{generating vector field} $\xi_X$ on $X$ via
	\begin{equation}(\xi_X)_x=\frac{d}{dt}\bigg\vert_{t=0}\left(\exp(-t\xi)\cdot x\right)\in T_xX,\quad x\in X.\end{equation} A pair $(X,\mu)$ is called a \textit{Hamiltonian $G$-variety} if $X$ is a Poisson $G$-variety, the $G$-action on $X$ preserves the Poisson bivector field, and $\mu:X\longrightarrow\g^*$ is a $G$-equivariant morphism satisfying \begin{equation}\sigma^{\vee}\bigg(\mathrm{d}\big(\mu(\cdot)(\xi)\big)\bigg)=\xi_X\end{equation} for all $\xi\in\g$. In this case, $\mu$ is called the \textit{moment map}. An algebraic $G$-action on a Poisson variety $X$ is called \textit{Hamiltonian} if $(X,\mu)$ is a Hamiltonian $G$-variety for some moment map $\mu:X\longrightarrow\g^*$.
	
	Let us briefly discuss some Hamiltonian $G$-varieties that are ubiquitous in this paper. To begin, let $X$ be an arbitrary $G$-variety. The action of $G$ on $X$ has a canonical lift to an algebraic $G$-action on $T^*X$; the latter is called the \textit{cotangent lift} of the former. At the same time, $T^*X$ carries the tautological one-form; its exterior derivative is the canonical symplectic form on $T^*X$. The aforementioned $G$-action on $T^*X$ is then Hamiltonian with moment map \begin{equation}\mu:T^*X\longrightarrow\g^*,\quad \mu(x,\phi)(\xi)=-\phi((\xi_X)_x),\quad x\in X,\text{ }\phi\in T_x^*X,\text{ }\xi\in\g.\end{equation}
	
	\subsubsection{Algebraic symplectic quotients}
	We adopt the following specific version of Marsden--Weinstein reduction \cite{MarsdenWeinstein} in the algebraic category. Fix an algebraic group $G$ with Lie algebra $\g$. Let $G_{\xi}\subset G$ denote the $G$-stabilizer of $\xi\in\g^*$ under the coadjoint action. Given a symplectic Hamiltonian $G$-variety $(X,\mu)$, it follows that the closed subvariety $\mu^{-1}(\xi)\subset X$ is $G_{\xi}$-invariant. Now suppose that $G_{\xi}$ acts freely on $\mu^{-1}(\xi)$, and that this action admits a geometric quotient $\pi:\mu^{-1}(\xi)\longrightarrow\mu^{-1}(\xi)/G_{\xi}$. The quotient variety $\mu^{-1}(\xi)/G_{\xi}$ is then smooth and admits a unique algebraic symplectic form $\omega_{\xi}$ satisfying $\pi^*\omega_{\xi}=\mathrm{j}^*\omega$, where $\mathrm{j}:\mu^{-1}(\xi)\longrightarrow X$ is the inclusion and $\omega$ is the symplectic form on $X$ \cite[Theorem 5.3]{CrooksMayrand}. We set
	\begin{equation}X\sll{\xi}G\coloneqq\mu^{-1}(\xi)/G_{\xi},\end{equation} and call this symplectic variety \textit{the symplectic quotient of $X$ by $G$ at level $\xi$}.
	
	\subsection{Affine Hamiltonian Lagrangian $G$-bundles}\label{Subsection: AHL bundles}
	We now assemble some of the pertinent ideas and definitions from Sections 1 and 2 of Lisiecki's paper \cite{Lisiecki}. While Lisiecki works in the holomorphic category, our exposition reflects the slight modifications needed for these ideas and definitions to apply in the algebraic category.  
	
	\begin{definition}
		A morphism $\sigma:X\longrightarrow Y$ from a symplectic variety $(X,\omega)$ to a smooth variety $Y$ is called a \textit{Lagrangian fibration} if the following conditions are satisfied:
		\begin{itemize}
			\item[\textup{(i)}] $\sigma$ is a locally trivial holomorphic fiber bundle with respect to the Euclidean topologies on $X$ and $Y$;
			\item[\textup{(ii)}] for all $y\in Y$, the fiber $\sigma^{-1}(y)\subset X$ is a Lagrangian subvariety.
		\end{itemize}
	\end{definition}
	
	Suppose that this definition is satisfied. Given $x\in X$, consider the differential $\mathrm{d}\sigma_x:T_xX\longrightarrow T_{\sigma(x)}Y$ and its transpose $\mathrm{d}\sigma_x^*:T_{\sigma(x)}^*Y\longrightarrow T_{x}^*X$. Let us also recall that \begin{equation}\omega_x^{\vee}:T_xX\longrightarrow T_x^*X,\quad \omega_x^{\vee}(v)=\omega_x(v,\cdot)\end{equation} is a vector space isomorphism. It is straightforward to verify that $(\omega_x^{\vee})^{-1}(d\sigma_x^*(\phi))\in T_x\sigma^{-1}(\sigma(x))$ for all $\phi\in T_{\sigma(x)}^*Y$, and that \begin{equation}\label{Equation: Data}T_{\sigma(x)}^*Y\longrightarrow T_x\sigma^{-1}(\sigma(x)),\quad\phi\mapsto (\omega_x^{\vee})^{-1}(d\sigma_x^*(\phi))\end{equation} is a vector space isomorphism. By viewing $T^*Y\longrightarrow Y$ as a Lie algebroid with zero anchor map and abelian isotropy algebras, we may interpret \eqref{Equation: Data} as a Lie algebroid action of $T^*Y$ on $\sigma:X\longrightarrow Y$. It is then reasonable to distinguish the cases in which \eqref{Equation: Data} integrates to an action of the abelian group scheme $T^*Y\longrightarrow Y$. 
	
	\begin{definition}
		A Lagrangian fibration $\sigma:X\longrightarrow Y$ is called \textit{affine} if \eqref{Equation: Data} integrates to an action of the abelian group scheme $T^*Y\longrightarrow Y$ on $\sigma:X\longrightarrow Y$.
	\end{definition}
	
	It is advantageous to give the following enrichment of this definition in the context of Hamiltonian geometry.
	
	\begin{definition}
		Consider an algebraic group $G$ and $G$-variety $Y$.
		\begin{itemize}
			\item[\textup{(i)}] An \textit{affine Hamiltonian Lagrangian (AHL) $G$-bundle} over $Y$ is a triple $(X,\mu,\sigma)$, consisting of a symplectic Hamiltonian $G$-variety $(X,\mu)$ and a $G$-equivariant affine Lagrangian fibration $\sigma:X\longrightarrow Y$.
			\item[\textup{(ii)}] Two AHL $G$-bundles $(X_1,\mu_1,\sigma_1)$ and $(X_2,\mu_2,\sigma_2)$ over $Y$ are called \textit{isomorphic} if there exists a $G$-equivariant symplectic variety isomorphism $\varphi:X_1\longrightarrow X_2$ satisfying $\mu_1=\mu_2\circ\varphi$ and $\sigma_1=\sigma_2\circ\varphi$. 
			\item[\textup{(iii)}] We write $\mathcal{A}\mathcal{H}\mathcal{L}_G(Y)$ for the set of isomorphism classes of AHL $G$-bundles over $Y$.
		\end{itemize}
	\end{definition}
	
	Perhaps the most basic example of an AHL $G$-bundle over $Y$ is the Hamiltonian $G$-variety $T^*Y$; the integration of the Lie algebroid action is addition of covectors. Further examples are given in Subsection \ref{Subsection: A Lie-theoretic class}.
	
	The preceding definition combines with the last paragraph of Subsection \ref{Subsection: Context and motivation} to yield the following definition. 
	
	\begin{definition}\label{Definition: Universal}
		Consider an algebraic group $G$ and $G$-variety $Y$. A \textit{universal family} of AHL $G$-bundles over $Y$ consists of a quadruple $(\mathcal{U},\mu,\pi,\sigma)$ and an algebraic variety structure on $\mathcal{A}\mathcal{H}\mathcal{L}_G(Y)$ that satisfy the following properties:
		\begin{itemize}
			\item[\textup{(i)}] $(\mathcal{U},\mu)$ is a Poisson Hamiltonian $G$-variety;
			\item[\textup{(ii)}] $\pi:\mathcal{U}\longrightarrow\mathcal{A}\mathcal{H}\mathcal{L}_G(Y)$ is a surjective, flat, $G$-invariant morphism;
			\item[\textup{(iii)}] for all $\psi\in\mathcal{A}\mathcal{H}\mathcal{L}_G(Y)$, $\pi^{-1}(\psi)$ is a symplectic leaf of $\mathcal{U}$;
			\item[\textup{(iv)}] $\sigma:\mathcal{U}\longrightarrow Y$ is a $G$-equivariant morphism;
			\item[\textup{(v)}] for all $\psi\in\mathcal{A}\mathcal{H}\mathcal{L}_G(Y)$, \begin{equation}\left(\pi^{-1}(\psi),\mu\big\vert_{\pi^{-1}(\psi)},\sigma\big\vert_{\pi^{-1}(\psi)}\right)\end{equation} is an AHL $G$-bundle over $Y$ that represents the isomorphism class $\psi$.
		\end{itemize} 
	\end{definition}
	
	\subsection{Lisiecki's classification}\label{Subsection: A Lie-theoretic class}
	Suppose that we have an algebraic group $G$ with Lie algebra $\g$, a closed subgroup $H\subset G$ with Lie algebra $\h\subset\g$, and an element $\psi\in(\h^*)^H$. Such Lie-theoretic data determine an AHL $G$-bundle over $(T^*(G/H))^{\psi}$ over $G/H$, as detailed in \cite[Subsection 2.4]{Lisiecki}. To this end, use the left trivialization to identify the cotangent bundle $T^*G$ with $G\times\g^*$. The $(G\times G)$-action
	\begin{equation}(g_1,g_2)\cdot(h,\xi)\coloneqq(g_1hg_2^{-1},\mathrm{Ad}^*_{g_2}(\xi)),\quad (g_1,g_2)\in G\times G,\text{ }(h,\xi)\in G\times\g^*=T^*G\end{equation} is the cotangent lift of the following $(G\times G)$-action on $G$: \begin{equation}\label{Equation: Action on base}(g_1,g_2)\cdot h=g_1hg_2^{-1},\quad (g_1,g_2)\in G\times G,\text{ }h\in G.\end{equation} It follows that this lifted action is Hamiltonian with moment map
	\begin{equation}\label{Equation: Double moment}(\mu_1,\mu_2):G\times\g^*\longrightarrow\g^*\times\g^*=(\g\times\g)^*,\quad (g,\xi)\mapsto(\mathrm{Ad}_g^*(\xi),-\xi),\quad (g,\xi)\in G\times\g^*=T^*G.\end{equation} Observe that $\mu_1$ (resp. $\mu_2$) is a moment map for the Hamiltonian action of $G=G\times\{e\}$ (resp. $G=\{e\}\times G$) on $T^*G$. 
	
	Consider the action of the subgroup $H=\{e\}\times H\subset G\times G$ on $T^*G$. The symplectic quotient \begin{equation}(T^*(G/H))^{\psi}\coloneqq T^*G\sll{\psi}H=(\mathrm{r}\circ\mu_2)^{-1}(\psi)/H=G\times_H(\mathrm{r}^{-1}(-\psi))\end{equation} carries the following structure of an AHL $G$-bundle over $G/H$, where $\mathrm{r}:\g^*\longrightarrow\h^*$ is the restriction map: the Hamiltonian $G$-action is by left multiplication in the first factor of $G\times_H(\mathrm{r}^{-1}(-\psi))$, the moment map is
	\begin{equation}\label{Equation: Twisted moment} G\times_H(\mathrm{r}^{-1}(-\psi))\longrightarrow\g^*,\quad [g:\xi]\mapsto\mathrm{Ad}_g^*(\xi),\end{equation} and the affine Lagrangian fibration \begin{equation}\label{Equation: Twisted fibration}G\times_H(\mathrm{r}^{-1}(-\psi))\longrightarrow G/H,\quad [g:\xi]\mapsto[g]\end{equation} is projection from the first factor. 
	
	\begin{definition}\label{Definition: AHL1}
		Given $\psi\in(\mathfrak{h}^*)^H$, the AHL $G$-bundle $(T^*(G/H))^{\psi}$ is called the \textit{$\psi$-twisted cotangent bundle} of $G/H$.
	\end{definition}
	
	This nomenclature appears in \cite[Subsection 1.4]{Chriss} and reflects the observation that $(T^*(G/H))^{\psi}=T^*(G/H)$ for $\psi=0$. Lisiecki's construction of $(T^*(G/H))^{\psi}$ is inspired by the \textit{symplectic induction} procedure \cite{WeinsteinUniversal,GuilleminSternbergProceedings,Duval,KKS}. It also combines with the preceding discussion to give context for the following restatement of \cite[Proposition 2.6]{Lisiecki}.
	
	\begin{proposition}[Lisiecki]\label{Proposition: Lisiecki}
		The map \begin{equation}\label{Equation: Lisiecki}(\mathfrak{h}^*)^H\longrightarrow\mathcal{A}\mathcal{H}\mathcal{L}_G(G/H),\quad\psi\mapsto[(T^*(G/H))^{\psi}]\end{equation} is a bijection.
	\end{proposition}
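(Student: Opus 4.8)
The plan is to prove injectivity by a short direct argument, and surjectivity by reconstructing an arbitrary AHL $G$-bundle over $G/H$ from its fiber over the base point $[e]\in G/H$.

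\textbf{Injectivity.} Suppose $\varphi\colon(T^*(G/H))^{\psi_1}\to(T^*(G/H))^{\psi_2}$ is an isomorphism of AHL $G$-bundles. Since $\varphi$ intertwines the fibrations \eqref{Equation: Twisted fibration}, it carries the fiber $\mathrm{r}^{-1}(-\psi_1)$ over $[e]$ onto $\mathrm{r}^{-1}(-\psi_2)$. On each of these fibers the moment map \eqref{Equation: Twisted moment} restricts to the inclusion into $\g^*$; as $\varphi$ intertwines the moment maps, it must restrict to the identity on $\mathrm{r}^{-1}(-\psi_1)\subseteq\g^*$. Hence $\mathrm{r}^{-1}(-\psi_1)=\mathrm{r}^{-1}(-\psi_2)$ as affine subspaces of $\g^*$, which forces $\psi_1=\psi_2$.

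\textbf{Surjectivity.} Let $(X,\mu,\sigma)$ be an AHL $G$-bundle over $G/H$ and put $F\coloneqq\sigma^{-1}([e])$. Because $\sigma$ is $G$-equivariant and $H$ is the $G$-stabilizer of $[e]$, the subvariety $F$ is $H$-invariant, and $X\cong G\times_H F$ as $G$-varieties over $G/H$. I would first identify $F$, as an $H$-variety equipped with $\mu\big\vert_F$, with $\mathrm{r}^{-1}(-\psi)$ for a suitable $\psi\in(\h^*)^H$. Since $F$ is Lagrangian and $H$-invariant, the generating vector field $\zeta_X$ is tangent to $F$ for every $\zeta\in\h$, so the moment-map identity gives $\mathrm{d}\big(\mu(\cdot)(\zeta)\big)\big\vert_{TF}=0$; as $F$ is connected (its fibers being affine spaces), $\mathrm{r}\circ\mu\big\vert_F$ is a constant $\psi_0\in\h^*$, and its $H$-equivariance forces $\psi_0\in(\h^*)^H$. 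Set $\psi\coloneqq-\psi_0$, so $\mu(F)\subseteq\mathrm{r}^{-1}(-\psi)$. Next, the \emph{affine} structure on $\sigma$ restricts over $[e]$ to an action of the vector group $T^*_{[e]}(G/H)=\mathrm{ann}_{\g^*}(\h)$ on $F$, making $F$ a torsor; a direct computation with the moment-map identity and the identification \eqref{Equation: Data} shows that $\mu\big\vert_F$ intertwines this action with the translation action of $\mathrm{ann}_{\g^*}(\h)=\ker\mathrm{r}$ on $\mathrm{r}^{-1}(-\psi)$. An $H$-equivariant morphism of torsors over the same vector group is an isomorphism, so $\mu\big\vert_F\colon F\xrightarrow{\sim}\mathrm{r}^{-1}(-\psi)$ is an isomorphism of $H$-varieties.

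Applying $G\times_H(-)$ then produces a $G$-equivariant isomorphism $\Phi\colon X=G\times_H F\to G\times_H\mathrm{r}^{-1}(-\psi)=(T^*(G/H))^{\psi}$ over $G/H$, $[g:x]\mapsto[g:\mu(x)]$, which intertwines the moment maps by $G$-equivariance of $\mu$. It remains to verify that $\Phi$ is symplectic. Both symplectic forms in question are $G$-invariant and $G\cdot F=X$, so it suffices to compare them at points of $F$, where $T_xX$ is spanned by $T_xF=\ker\mathrm{d}\sigma_x$ together with the generating vectors $(\xi_X)_x$, $\xi\in\g$ (using transitivity of $G$ on $G/H$). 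On $T_xF$ both forms vanish, since $\sigma$ is a Lagrangian fibration for both; the pairing of $T_xF$ with a generating vector, and the pairing of two generating vectors, are each pinned down by $\mu$ alone via the moment-map identity together with the infinitesimal equivariance $\mathrm{d}\mu_x((\eta_X)_x)=-\mathrm{ad}^*_{\eta}(\mu(x))$. Hence the two forms agree on $F$, and therefore on $X$, so $(X,\mu,\sigma)\cong(T^*(G/H))^{\psi}$.

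\textbf{Main obstacle.} The crux is the torsor identification in the surjectivity step: one must use the affine hypothesis in an essential way to see that $F$ is a torsor over $\mathrm{ann}_{\g^*}(\h)$ and that $\mu\big\vert_F$ realizes it as $\mathrm{r}^{-1}(-\psi)$, and one must take care that Lisiecki's holomorphic arguments transpose to the algebraic category (Euclidean-topology local triviality, integrating the Lie algebroid action \eqref{Equation: Data} to a genuine group-scheme action). By comparison, the symplectic-form rigidity and the reconstruction $X\cong G\times_H F$ are routine, if slightly lengthy, verifications.
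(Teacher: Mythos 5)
The paper does not actually prove this proposition: it is imported verbatim from Lisiecki (cited there as Proposition 2.6), with only the remark that the holomorphic argument transfers to the algebraic category. Your proposal therefore cannot be "the same as the paper's proof"; judged on its own, it is a correct reconstruction of the expected argument. The injectivity step is exactly right: an isomorphism preserves the fiber over $[e]$, the moment map restricts there to the inclusion of $\mathrm{r}^{-1}(-\psi_i)$ into $\g^*$, and two parallel affine subspaces that coincide give $\psi_1=\psi_2$. The surjectivity skeleton --- $F=\sigma^{-1}([e])$ is $H$-stable, $\mathrm{r}\circ\mu\vert_F$ is constant because $F$ is isotropic and the $\h$-generating fields are tangent to it, the affine structure makes $F$ an $\mathrm{ann}_{\g^*}(\h)$-torsor that $\mu\vert_F$ identifies equivariantly with $\mathrm{r}^{-1}(-\psi)$, and the symplectic form is then determined on $T_xF+\{(\xi_X)_x:\xi\in\g\}=T_xX$ by the Lagrangian condition and the moment-map identity --- is also sound. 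Two points deserve more care than your sketch gives them. First, connectedness of $F$: the paper's definition of a Lagrangian fibration does not require connected fibers, and your parenthetical justification (``its fibers being affine spaces'') is circular, since the torsor structure on $F$ is what you are in the process of establishing; you should instead observe that the $T^*_{[e]}(G/H)$-orbits in $F$ are open because \eqref{Equation: Data} is an isomorphism, and then either argue componentwise or take fiber-connectedness as part of the affineness hypothesis. Second, the identifications $X\cong G\times_H F$ and the bijectivity-implies-isomorphism steps need a brief justification in the algebraic category (e.g.\ a bijective morphism between smooth complex varieties in characteristic zero is an isomorphism); you correctly flag this transfer issue but do not resolve it. Neither point is a fatal gap.
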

	
	While Lisiecki proves Proposition \ref{Proposition: Lisiecki} in the holomorphic category, is it straightforward to deduce that this result holds in the algebraic setting of our paper.
	
	\subsection{Some universal families}\label{Subsection: Some universal families}
	Let $G$, $H$, $\g$, and $\h$ be as in the previous subsection. Observe that \begin{equation}\label{Equation: Statement}T^*(G/[H,H])=T^*G\sll{0}[H,H]=G\times_{[H,H]}[\h,\h]^{\circ},\end{equation} where the middle term denotes the symplectic quotient of $T^*G=G\times\g^*$ by $[H,H]=\{e\}\times[H,H]\subset G\times G$ at level zero. On the other hand, the $(G\times G)$-action on $G$ determines a residual action of $G\times A(H)$ on $G/[H,H]$:
	\begin{equation}\label{Equation: Second action on base}(g,[h])\cdot [k]=[gkh^{-1}],\quad (g,[h])\in G\times A(H),\text{ }[k]\in G/[H,H].\end{equation} The cotangent lift of the latter action to $T^*(G/[H,H])$ is the residual action of $G\times A(H)$ on the subquotient $T^*G\sll{0}[H,H]$ of $T^*G$, i.e.
	\begin{equation}\label{Equation: Lifted action}(g,[h])\cdot [k:\xi]\coloneqq [gkh^{-1}:\mathrm{Ad}_h^*(\xi)]\end{equation} for all $(g,[h])\in G\times A(H)$ and $[k:\xi]\in G\times_{[H,H]}[\h,\h]^{\circ}=T^*(G/[H,H])$. This and \eqref{Equation: Double moment} imply that \begin{equation}\label{Equation: Second double}(\mu_G,\mu_{A(H)}):T^*(G/[H,H])\longrightarrow\g^*\times\mathfrak{a}(\h)^*=(\g\times\mathfrak{a}(\h))^*,\quad [g:\xi]\mapsto(\mathrm{Ad}_g^*(\xi),-\xi\big\vert_{\h})\end{equation} is the canonical moment map for the cotangent lift of \eqref{Equation: Second action on base}, where one uses the identification $\mathfrak{a}(\h)^*=\mathrm{ann}_{\h^*}([\h,\h])$ to interpret $\xi\big\vert_{\h}\in\mathrm{ann}_{\h^*}([\h,\h])$ as belonging to $\mathrm{a}(\h)^*$. 
	
	Equations \eqref{Equation: Statement} and \eqref{Equation: Lifted action} tell us that the action of $A(H)=\{e\}\times A(H)\subset G\times A(H)$ on $T^*(G/[H,H])$ admits a smooth geometric quotient of
	\begin{equation}\label{Equation: Nicest}\mathcal{U}_{G/H}\coloneqq T^*(G/[H,H])/A(H)=G\times_H[\h,\h]^{\circ}.\end{equation} The cotangent bundle projection $T^*(G/[H,H])\longrightarrow G/[H,H]$ then descends to the associated vector bundle projection \begin{equation}\sigma_{G/H}:\mathcal{U}_{G/H}=G\times_H[\h,\h]^{\circ}\longrightarrow G/H,\quad [g:\xi]\mapsto [g].\end{equation} At the same time, $\mu_G$ and $\mu_{A(H)}$ descend to the $G$-equivariant morphism
	\begin{equation}\label{Equation: Moment map definition}\mu_{G/H}:\mathcal{U}_{G/H}\longrightarrow\g^*,\quad [g:\xi]\mapsto\mathrm{Ad}_g^*(\xi)\end{equation} and
	$A(H)$-invariant morphism \begin{equation}\pi_{G/H}:\mathcal{U}_{G/H}\longrightarrow\mathfrak{a}(\h)^*,\quad [g:\xi]\mapsto -\xi\big\vert_{\h},\end{equation} respectively. 
	
	\begin{proposition}\label{Proposition: Prelim}
		The following statements hold.
		\begin{itemize}
			\item[\textup{(i)}] The variety $\mathcal{U}_{G/H}$ carries a unique Poisson structure for which the quotient morphism $T^*(G/[H,H])\longrightarrow\mathcal{U}_{G/H}$ is a Poisson submersion.
			\item[\textup{(ii)}] This Poisson structure is regular and of rank $2\dim(G/H)$.
			\item[\textup{(iii)}] The action of $G=G\times\{e\}\subset G\times A(H)$ on $T^*(G/[H,H])$ descends to a Hamiltonian action of $G$ on $\mathcal{U}_{G/H}$.
			\item[\textup{(iv)}] The morphism $\mu_{G/H}$ is a moment map for the Hamiltonian action in \textup{(iii)}.
			\item[\textup{(v)}] For each $\psi\in\mathfrak{a}(\h)^*$, $\pi_{G/H}^{-1}(\psi)$ is a Poisson subvariety of $\mathcal{U}_{G/H}$ satisfying \begin{equation}\pi_{G/H}^{-1}(\psi)=T^*(G/[H,H])\sll{\psi}A(H)\end{equation} as closed subvarieties of $\mathcal{U}_{G/H}$. The Poisson structure that $\pi_{G/H}^{-1}(\psi)$ inherits from $\mathcal{U}_{G/H}$ coincides with the symplectic structure on $T^*(G/[H,H])\sll{\psi}A(H)$.
			\item[\textup{(vi)}] The morphism $\pi_{G/H}$ is surjective, flat, and $G$-invariant.
		\end{itemize}
	\end{proposition}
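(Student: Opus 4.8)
The plan is to establish the six claims in the order (i), then (iii)--(iv), then (v), and finally (vi)--(ii), as the later statements build on the earlier ones. Throughout, abbreviate $M\coloneqq T^*(G/[H,H])$ and write $p:M\longrightarrow\mathcal{U}_{G/H}$ for the quotient morphism. Claim (i) is an instance of Poisson reduction by a free Hamiltonian action: the $A(H)$-action on $G/[H,H]$ is free, since $[h]\cdot[k]=[k]$ forces $h\in[H,H]$, and freeness passes to $M$, so $p$ is a surjective submersion which, being a geometric quotient, identifies $\mathcal{O}_{\mathcal{U}_{G/H}}$ with the subsheaf of $A(H)$-invariants in $\mathcal{O}_M$. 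Because $A(H)$ acts on $M$ by symplectomorphisms, the Poisson bracket of two $A(H)$-invariant functions is again $A(H)$-invariant; transporting this bracket along $p^*$ produces a Poisson structure $\sigma_{\mathcal{U}}$ on $\mathcal{U}_{G/H}$, and it is the only one for which $p$ is Poisson, since $p^*$ is injective on sheaves.

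Next we would treat (iii) and (iv) together. The $G$- and $A(H)$-actions on $M$ commute, being restrictions of a single $G\times A(H)$-action, so the $G$-action descends to $\mathcal{U}_{G/H}$ and preserves $\sigma_{\mathcal{U}}$, as $p$ is $G$-equivariant and Poisson. Equivariance of the moment map $(\mu_G,\mu_{A(H)})$ shows that $G\times A(H)$ acts block-diagonally on the target $\g^*\oplus\mathfrak{a}(\h)^*$, with $\{e\}\times A(H)$ acting trivially; hence $\mu_G$ is $A(H)$-invariant and descends to $\mu_{G/H}$, while $\mu_{A(H)}$ is $(G\times A(H))$-invariant and descends to $\pi_{G/H}$. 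The moment map identity for $\mu_{G/H}$ is then verified by pulling back along $p$: the Hamiltonian vector field of $\mu_G(\cdot)(\xi)=p^*(\mu_{G/H}(\cdot)(\xi))$ equals $\xi_M$ by the moment map property on $M$; since $p$ is Poisson, this field is $p$-related to the Hamiltonian vector field of $\mu_{G/H}(\cdot)(\xi)$, and since $p$ is $G$-equivariant it is $p$-related to $\xi_{\mathcal{U}_{G/H}}$; as $p$ is a surjective submersion, the two vector fields on $\mathcal{U}_{G/H}$ coincide. Equivariance of $\mu_{G/H}$ descends from that of $\mu_G$, which completes (iii) and (iv).

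For (v), note that $A(H)$ is abelian, so $A(H)_\psi=A(H)$, and that $\mu_{A(H)}^{-1}(\psi)\subset M$ is a closed $A(H)$-saturated subvariety; restricting the geometric quotient $p$ therefore gives $\pi_{G/H}^{-1}(\psi)=\mu_{A(H)}^{-1}(\psi)/A(H)=M\sll{\psi}A(H)$, which by \cite[Theorem 5.3]{CrooksMayrand} (applicable since the $A(H)$-action is free) is smooth with its Marsden--Weinstein symplectic form $\omega_{\psi}$. Moreover each function $\ell\circ\pi_{G/H}$ with $\ell\in\mathfrak{a}(\h)$ is a Casimir of $\mathcal{U}_{G/H}$: its Hamiltonian vector field is $p$-related to that of $\ell\circ\mu_{A(H)}$ on $M$, which is the generating vector field of $\ell$ for the $A(H)$-action and hence lies in $\ker(\mathrm{d}p)$. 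Thus $\sigma_{\mathcal{U}}$ is tangent to every $\pi_{G/H}^{-1}(\psi)$, which is accordingly a Poisson subvariety, and a computation with $A(H)$-invariant functions identifies the induced bracket with the one determined by $\omega_{\psi}$.

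It then remains to treat (vi) and (ii). We would argue that $\pi_{G/H}$ is $G$-invariant because $\mu_{A(H)}$ is; that it is surjective because the restriction map $\g^*\longrightarrow\h^*$ carries $\mathrm{ann}_{\g^*}([\h,\h])$ onto $\mathrm{ann}_{\h^*}([\h,\h])=\mathfrak{a}(\h)^*$; and that it is flat by miracle flatness, since $\mathcal{U}_{G/H}$ and $\mathfrak{a}(\h)^*$ are smooth and, by (v), every fiber has dimension $2\dim(G/H)=\dim\mathcal{U}_{G/H}-\dim\mathfrak{a}(\h)^*$ (alternatively, $(\sigma_{G/H},\pi_{G/H}):\mathcal{U}_{G/H}\longrightarrow(G/H)\times\mathfrak{a}(\h)^*$ is a fibrewise-surjective morphism of vector bundles over $G/H$, hence flat, and one composes with the projection onto $\mathfrak{a}(\h)^*$). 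For (ii), statement (v) exhibits $\mathcal{U}_{G/H}$ as the disjoint union of the subvarieties $\pi_{G/H}^{-1}(\psi)$, each a symplectic Poisson subvariety of dimension $2\dim(G/H)$, so $\mathrm{Image}(\sigma^{\vee}_x)=T_x\pi_{G/H}^{-1}(\pi_{G/H}(x))$ for all $x$; hence the symplectic leaves are exactly the connected components of the fibers of $\pi_{G/H}$, all of dimension $2\dim(G/H)$, and the Poisson structure is regular of that rank. The one genuinely delicate point should be the last sentence of (v) --- matching the Poisson structure that $\pi_{G/H}^{-1}(\psi)$ inherits as a Poisson subvariety of $\mathcal{U}_{G/H}$ with the symplectic form $\omega_{\psi}$ on $M\sll{\psi}A(H)$, i.e.\ reconciling Poisson reduction by all of $A(H)$ with symplectic reduction at the single level $\psi$; the remaining steps are standard descent arguments, a linear-algebra surjectivity check, and a dimension count.
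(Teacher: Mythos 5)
Your proposal is correct and follows the same overall architecture as the paper's proof: (i) via invariance of the Poisson bracket on $A(H)$-invariant functions, (iii)--(iv) by descent of the commuting Hamiltonian actions and of $\mu_G$, (v) by identifying $\pi_{G/H}^{-1}(\psi)$ with $\mu_{A(H)}^{-1}(\psi)/A(H)=T^*(G/[H,H])\sll{\psi}A(H)$, and flatness in (vi) by miracle flatness from equidimensionality of fibers. The one place where you genuinely diverge is in how the leaf structure is established: the paper observes that $T^*(G/[H,H])\longrightarrow\mathcal{U}_{G/H}$ is a principal $A(H)$-bundle and then invokes \cite[Proposition 1.33]{CFMLectures} to get, in one stroke, that the symplectic quotients are Poisson subvarieties carrying the reduced symplectic form and that the symplectic leaves are their connected components; you instead argue from scratch that the components of $\pi_{G/H}$ are Casimirs (their Hamiltonian vector fields are $p$-related to generating vector fields of $A(H)$, which lie in $\ker\mathrm{d}p$), deduce tangency of the bivector to the fibers, and then read off regularity and the rank in (ii) from the fact that a Poisson subvariety whose induced structure is symplectic satisfies $\mathrm{Image}(\sigma^{\vee}_x)=T_x\pi_{G/H}^{-1}(\pi_{G/H}(x))$. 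Your route is self-contained where the paper's is a citation, at the cost of leaving the final matching of the induced bracket with the Marsden--Weinstein form $\omega_{\psi}$ as an asserted ``computation with $A(H)$-invariant functions''; that step is standard and is exactly what the cited proposition packages, so there is no gap, but if you wanted a complete argument you would spell out that for $A(H)$-invariant $f_1,f_2$ the reduced bracket on $\mu_{A(H)}^{-1}(\psi)/A(H)$ computed from $\omega_{\psi}$ agrees with the restriction of $\{f_1,f_2\}$.
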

	
	\begin{proof}
		To prove (i), recall that the fixed points of a group acting by automorphisms on a Poisson algebra form a Poisson subalgebra. This implies that \begin{equation}\mathcal{O}_{\mathcal{U}_{G/H}}=(\sigma_{G/H})_*\left((\mathcal{O}_{T^*(G/[H,H])})^{A(H)}\right)\end{equation} is a sheaf of Poisson algebras. It is also clearly the unique Poisson structure on $\mathcal{U}_{G/H}$ for which $T^*(G/[H,H])\longrightarrow\mathcal{U}_{G/H}$ is a Poisson submersion.
		
		We now verify (ii). Our task is to prove that each symplectic leaf of $\mathcal{U}_{G/H}$ has dimension equal to $2\dim(G/H)$. A first step is to note that the quotient morphism \begin{equation}T^*(G/[H,H])=G\times_{[H,H]}[\h,\h]^{\circ}\longrightarrow G\times_H[\h,\h]^{\circ}=\mathcal{U}_{G/H}\end{equation} is a principal $A(H)$-bundle in the Euclidean topology. A combination of (i) and \cite[Proposition 1.33]{CFMLectures}\footnote{A necessary condition for applying this result is for the $A(H)$-action on $T^*(G/[H,H])$ to be proper. To this end, the quotient map $G/[H,H]\longrightarrow G/H$ is a principal $A(H)$-bundle in the Euclidean topology. One concludes that $A(H)$ acts properly on $G/[H,H]$. It remains only to observe that cotangent lifts of proper Lie group actions are proper.} then reveals that the symplectic leaves of $\mathcal{U}_{G/H}$ are the connected components of the submanifolds $T^*(G/[H,H])\sll{\psi}A(H)$ in the Euclidean topology, where $\psi$ runs over the elements of $\mathfrak{a}(\h)^*$. Since $A(H)$ is abelian and acts freely on $T^*(G/[H,H])$, each of these symplectic leaves has dimension \begin{align}\dim T^*(G/[H,H])-2\dim A(H) & =2\dim G-2\dim [H,H]-2(\dim H-\dim [H,H]) \\ &= 2\dim G-2\dim H\\ &=2\dim(G/H).\end{align}
		
		Parts (iii) and (iv) follow immediately from (i), the fact that the Hamiltonian actions of $G$ and $A(H)$ on $T^*(G/[H,H])$ commute with one another, and the formula in \eqref{Equation: Second double} for the $G$-moment map $\mu_G:T^*(G/[H,H])\longrightarrow\g^*$.
		
		To prove (v), recall that $\pi_{G/H}$ is obtained by descending $\mu_{A(H)}$ to $T^*(G/[H,H])/A(H)=\mathcal{U}_{G/H}$. It follows that \begin{equation}\pi_{G/H}^{-1}(\xi)=\mu_{A(H)}^{-1}(\xi)/A(H)=T^*(G/[H,H])\sll{\xi}A(H)\end{equation} for all $\xi\in\mathfrak{a}(\h)^*$. We may also use \cite[Proposition 1.33]{CFMLectures} to conclude that $T^*(G/[H,H])\sll{\xi}A(H)$ is a Poisson subvariety of $\mathcal{U}_{G/H}$ when equipped with its canonical symplectic structure. These last two sentences imply (v).
		
		Let us now address (vi). To this end, we again recall that $\pi_{G/H}$ is obtained by letting $\mu_{A(H)}$ descend to $T^*(G/[H,H])/A(H)=\mathcal{U}_{G/H}$. The surjectivity assertion now follows immediately from the formula \eqref{Equation: Second double} defining $\mu_{A(H)}$. This formula and \eqref{Equation: Lifted action} also imply the $G$-invariance assertion. As for flatness, we may invoke (v) and the freeness of the $A(H)$-action on $T^*(G/[H,H])$. These facts imply that our surjective morphism $\pi_{G/H}$ has equi-dimensional fibers. We also know that $\pi_{G/H}$ is a morphism between smooth, irreducible varieties. An application of the ``miracle flatness criterion'' \cite[Theorem 23.1]{Matsumura} then forces $\pi_{G/H}$ to be flat.
	\end{proof}
	
	Let us discuss the stronger conclusions that can be obtained if one takes $H$ to be connected. In this case, one finds that \begin{equation}\label{Equation: Connectedness}(\mathfrak{h}^*)^H=(\mathfrak{h}^*)^{\h}=\mathrm{ann}_{\h^*}([\h,\h])=\mathfrak{a}(\h)^*.\end{equation} Lisiecki's bijection \eqref{Equation: Lisiecki} then allows us to write \begin{equation}\mathcal{A}\mathcal{H}\mathcal{L}_G(G/H)=(\h^*)^H=\mathfrak{a}(\h)^*;\end{equation} these identifications are made extensively and implicitly in the following result and proof.
	
	\begin{theorem}\label{Theorem: Universal}
		If $H$ is connected, then $(\mathcal{U}_{G/H},\mu_{G/H},\pi_{G/H},\sigma_{G/H})$ is a universal family of AHL $G$-bundles over $G/H$.
	\end{theorem}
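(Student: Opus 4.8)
The plan is to verify, one by one, the five conditions of Definition~\ref{Definition: Universal} for the quadruple $(\mathcal{U}_{G/H},\mu_{G/H},\pi_{G/H},\sigma_{G/H})$. The first step is to fix the algebraic variety structure on $\mathcal{A}\mathcal{H}\mathcal{L}_G(G/H)$ that the definition asks for: since $H$ is connected, \eqref{Equation: Connectedness} gives $(\h^*)^H=\mathfrak{a}(\h)^*$, so Lisiecki's bijection (Proposition~\ref{Proposition: Lisiecki}) identifies $\mathcal{A}\mathcal{H}\mathcal{L}_G(G/H)$ with the affine space $\mathfrak{a}(\h)^*$, and we declare this to be its variety structure. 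Under this identification, $\pi_{G/H}$ is literally the already-constructed morphism $\mathcal{U}_{G/H}\longrightarrow\mathfrak{a}(\h)^*$. With this in place, condition~(i) is exactly Proposition~\ref{Proposition: Prelim}(i),(iii),(iv); condition~(ii) is Proposition~\ref{Proposition: Prelim}(vi); and condition~(iv) is immediate from the formula $\sigma_{G/H}([g:\xi])=[g]$, which is $G$-equivariant for the residual left $G$-action.

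For condition~(iii) I would combine parts (ii) and (v) of Proposition~\ref{Proposition: Prelim}. The proof of (ii) shows that the symplectic leaves of $\mathcal{U}_{G/H}$ are precisely the connected components of the fibers $\pi_{G/H}^{-1}(\psi)=T^*(G/[H,H])\sll{\psi}A(H)$, $\psi\in\mathfrak{a}(\h)^*$. Each such fiber equals $G\times_H\{\xi\in\g^*:\xi\vert_\h=-\psi\}$, an affine bundle over $G/H$ modeled on $T^*(G/H)$, hence connected; so each fiber is a single symplectic leaf, which is condition~(iii).

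The crux is condition~(v). Fix $\psi\in\mathfrak{a}(\h)^*=(\h^*)^H$. Since $\psi$ annihilates $[\h,\h]$, the condition $\xi\vert_\h=-\psi$ already forces $\xi\in\mathrm{ann}_{\g^*}([\h,\h])$, so Proposition~\ref{Proposition: Prelim}(v) identifies $\pi_{G/H}^{-1}(\psi)$ — as a closed subvariety of $\mathcal{U}_{G/H}$ carrying the residual $G$-action and the restriction of $\sigma_{G/H}$ — with $G\times_H\mathrm{r}^{-1}(-\psi)$ under left multiplication by $G$ and the projection $[g:\xi]\mapsto[g]$; these are exactly the variety and structure maps underlying the $\psi$-twisted cotangent bundle $(T^*(G/H))^{\psi}$ of Definition~\ref{Definition: AHL1} and Subsection~\ref{Subsection: A Lie-theoretic class}. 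Likewise $\mu_{G/H}\vert_{\pi_{G/H}^{-1}(\psi)}$ is the descent of $\mu_G$, namely $[g:\xi]\mapsto\mathrm{Ad}_g^*(\xi)$, which is the moment map~\eqref{Equation: Twisted moment}. It then remains only to check that the Poisson structure $\pi_{G/H}^{-1}(\psi)$ inherits from $\mathcal{U}_{G/H}$ — which, by Proposition~\ref{Proposition: Prelim}(v), is the symplectic form of $T^*(G/[H,H])\sll{\psi}A(H)$ — agrees with the symplectic form of $T^*G\sll{\psi}H=(T^*(G/H))^{\psi}$. This is reduction in stages for the normal subgroup $[H,H]\trianglelefteq H$; in the present algebraic setting it can be obtained from \cite{CrooksMayrand}, or verified directly by observing that both symplectic forms are characterized by pulling back, along the respective total quotient map, to the restriction of the canonical symplectic form of $T^*G$ to $\{(g,\xi):\xi\vert_\h=-\psi\}$. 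Granting this, $(\pi_{G/H}^{-1}(\psi),\mu_{G/H}\vert_{\pi_{G/H}^{-1}(\psi)},\sigma_{G/H}\vert_{\pi_{G/H}^{-1}(\psi)})$ is the AHL $G$-bundle $(T^*(G/H))^{\psi}$ on the nose; in particular it is an AHL $G$-bundle over $G/H$ representing the class $\psi$ under Lisiecki's bijection, which is condition~(v).

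I expect the only genuinely non-formal point to be this last comparison of symplectic structures — confirming that the two a priori different symplectic forms on $G\times_H\mathrm{r}^{-1}(-\psi)$ coincide; everything else amounts to bookkeeping with the explicit formulas recorded in Subsection~\ref{Subsection: A Lie-theoretic class} and Proposition~\ref{Proposition: Prelim}.
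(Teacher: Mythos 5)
Your proposal is correct and follows essentially the same route as the paper: reduce to conditions (iii) and (v) of Definition~\ref{Definition: Universal} via Proposition~\ref{Proposition: Prelim}, prove (iii) by observing that each fiber $\pi_{G/H}^{-1}(\psi)=G\times_H\mathrm{r}^{-1}(-\psi)$ is connected, and prove (v) by identifying $\pi_{G/H}^{-1}(\psi)=T^*(G/[H,H])\sll{\psi}A(H)=(T^*G\sll{0}[H,H])\sll{\psi}A(H)=T^*G\sll{\psi}H$ via reduction in stages for $[H,H]\trianglelefteq H$ and matching the moment map and fibration with \eqref{Equation: Twisted moment} and \eqref{Equation: Twisted fibration}. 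The paper asserts the stages identity of symplectic $G$-varieties in one line, whereas you flag it as the one non-formal point and sketch its verification by pulling both forms back to the common level set in $T^*G$ --- a slightly more explicit treatment of the same step.
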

	
	\begin{proof}
		In light of Proposition \ref{Proposition: Prelim}, we only need to verify (iii) and (v) in Definition \ref{Definition: Universal}. Let us therefore fix $\psi\in\mathfrak{a}(\h)^*=\mathcal{A}\mathcal{H}\mathcal{L}_G(G/H)$. Recall the AHL $G$-bundle structure of $(T^*(G/H))^{\psi}$ from Subsection \ref{Subsection: A Lie-theoretic class}. Establishing (iii) and (v) then reduces to showing that $\pi_{G/H}^{-1}(\psi)$ is a symplectic leaf of $\mathcal{U}_{G/H}$, that \begin{equation}\left(\pi_{G/H}^{-1}(\psi),\mu_{G/H}\bigg\vert_{\pi_{G/H}^{-1}(\psi)},\sigma_{G/H}\bigg\vert_{\pi_{G/H}^{-1}(\psi)}\right)\end{equation} is an AHL $G$-bundle over $G/H$, and that this bundle is isomorphic to $(T^*(G/H))^{\psi}$.
		
		We first prove that $\pi_{G/H}^{-1}(\psi)$ is a symplectic leaf of $\mathcal{U}_{G/H}$. Proposition \ref{Proposition: Prelim}(v) tells us that $\pi_{G/H}^{-1}(\psi)=T^*(G/[H,H])\sll{\psi}A(H)$. On the other hand, \cite[Proposition 1.33]{CFMLectures} implies that $T^*(G/[H,H])\sll{\psi}A(H)$ is a symplectic leaf of $\mathcal{U}_{G/H}$ if and only if it is connected in the Euclidean topology. It therefore suffices to prove that $T^*(G/[H,H])\sll{\psi}A(H)$ is connected. This follows immediately from the observation that \begin{equation}T^*(G/[H,H])\sll{\psi}A(H)=G\times_H\mathrm{r}^{-1}(-\psi)\subset G\times_H[\h,\h]^{\circ}=\mathcal{U}_{G/H},\end{equation} where $\mathrm{r}:\g^*\longrightarrow\h^*$ is the restriction map.
		
		To address the balance of the proof, note that
		\begin{equation}\pi_{G/H}^{-1}(\psi)=T^*(G/[H,H])\sll{\psi}A(H)=\left(T^*G\sll{0}[H,H]\right)\sll{\psi}A(H)=T^*G\sll{\psi}H=T^*(G/H)^{\psi}\end{equation} as symplectic $G$-varieties. It is also clear that $\mu_{G/H}\big\vert_{\pi_{G/H}^{-1}(\psi)}$ (resp. $\sigma_{G/H}\big\vert_{\pi_{G/H}^{-1}(\psi)}$) coincides with the moment map \eqref{Equation: Twisted moment} (resp. Lagrangian fibration \eqref{Equation: Twisted fibration}) underlying the AHL $G$-bundle structure of $T^*(G/H)^{\psi}$. We conclude that \begin{equation}\left(\pi_{G/H}^{-1}(\psi),\mu_{G/H}\bigg\vert_{\pi_{G/H}^{-1}(\psi)},\sigma_{G/H}\bigg\vert_{\pi_{G/H}^{-1}(\psi)}\right)\end{equation} is an AHL $G$-bundle over $G/H$, and that this bundle is isomorphic to $(T^*(G/H))^{\psi}$. This completes the proof.
	\end{proof}
	
	\section{Incidence varieties}\label{Section: Incidence varieties}
	We now concern ourselves with incidence-theoretic aspects of Section \ref{Section: Lie-theoretic}. In Subsection \ref{Subsection: Self-normalizing}, we define and give examples of self-normalizing conjugacy classes of closed subgroups. Subsection \ref{Subsection: Variety structure} then establishes that such classes have canonical algebraic variety structures. This allows us to define an incidence-theoretic, regular Poisson variety $\mathcal{U}_{\mathcal{C}}$ for each self-normalizing conjugacy class $\mathcal{C}$.
	
	\subsection{Self-normalizing conjugacy classes}\label{Subsection: Self-normalizing}
	Let $G$ be an affine algebraic group. Note that $G$ acts on the set of its closed subgroups by conjugation. We refer to an orbit of this action as a \textit{conjugacy class} of closed subgroups of $G$. On the other hand, let $N_G(H)\subset G$ denote the normalizer of a closed subgroup $H\subset G$. Recall that $H$ is called \textit{self-normalizing} if $N_G(H)=H$. It is clear that the set of self-normalizing closed subgroups of $G$ is a union of conjugacy classes. A conjugacy class occurs in this union if and only if it contains a self-normalizing element. This discussion can be formalized as follows. 
	
	\begin{definition}
		Let $\mathcal{C}$ be a conjugacy class of closed subgroups of $G$. We call $\mathcal{C}$ \textit{self-normalizing} if $N_G(H)=H$ for all $H\in\mathcal{C}$.
	\end{definition}
	
	The following are some pertinent examples of self-normalizing closed subgroups. By definition, the conjugacy classes of these subgroups are self-normalizing.
	
	\begin{example}[Parabolic subgroups]\label{Example: Parabolic}
		Suppose that $G$ is connected and semisimple. Recall that a closed subgroup $P\subset G$ is called \textit{parabolic} if the variety $G/P$ is projective. The parabolic subgroups of $G$ constitute finitely many conjugacy classes. At the same time, parabolic subgroups are known to be self-normalizing.
	\end{example}
	
	\begin{example}[Normalizers of symmetric subgroups]\label{Example: Normalizers}
		Suppose that $G$ is connected, simply-connected, and semisimple. A closed subgroup $H\subset G$ is called \textit{symmetric} if $H=G^{\sigma}$ for an involutive algebraic group automorphism $\sigma:G\longrightarrow G$. In this case, $N_G(H)$ is self-normalizing and $N_G(H)/H$ is finite; see \cite[Section 2]{mjt:21} and \cite[Lemma 2.9]{Brion}.
	\end{example}
	
	\subsection{The variety structure on a self-normalizing conjugacy class}\label{Subsection: Variety structure} Let $\mathcal{C}$ be a conjugacy class of closed subgroups of $G$. Suppose that $H_1,H_2\in\mathcal{C}$, and let $g\in G$ be such that $H_2=gH_1g^{-1}$. It follows that
	\begin{equation}\label{Equation: Clear}
		G/H_1\longrightarrow G/H_2,\quad [k]\mapsto [kg^{-1}]
	\end{equation}
	is a well-defined $G$-variety isomorphism.
	
	\begin{proposition}
		Suppose that $\mathcal{C}$ is a self-normalizing conjugacy class of closed subgroups of $G$. If $H_1,H_2\in\mathcal{C}$, then the $G$-variety isomorphism \eqref{Equation: Clear} does not depend on the choice of $g\in G$ satisfying $H_2=gH_1g^{-1}$.
	\end{proposition}
	
	\begin{proof}
		Suppose that $g,\ell\in G$ satisfy $gH_1g^{-1}=H_2=\ell H_1\ell^{-1}$. It follows that $\ell=gh$ for some $h\in N_G(H_1)=H_1$. This implies that $gh^{-1}g^{-1}\in H_2$. Given $k\in G$, we conclude that
		\begin{equation}[k\ell^{-1}]=[k(gh)^{-1}]=[kg^{-1}(gh^{-1}g^{-1})]=[kg^{-1}]\in G/H_2.\end{equation} The proof is complete.
	\end{proof}
	
	Let $\mathcal{C}$ be a conjugacy class of closed subgroups of $G$. Given $H\in\mathcal{C}$, the map \begin{equation}\phi_H:G/N_G(H)\longrightarrow\mathcal{C},\quad [g]\mapsto gHg^{-1}\end{equation} is a $G$-equivariant bijection. We may equip $\mathcal{C}$ with the algebraic variety structure for which $\phi_H$ is a variety isomorphism. If $\mathcal{C}$ is self-normalizing, then this structure turns out to be independent of $H$.
	
	\begin{proposition}\label{Proposition: Canonical structure on conjugacy classes}
		Let $\mathcal{C}$ be a self-normalizing conjugacy class of closed subgroups of $G$. There exists a unique algebraic variety structure on $\mathcal{C}$ such that the $G$-equivariant bijection $\phi_H$ is a variety isomorphism for all $H\in\mathcal{C}$. 
	\end{proposition}
	
	\begin{proof}
		Uniqueness follows immediately from the fact that $\phi_H$ is a bijection for all $H\in\mathcal{C}$. To establish existence, suppose that $H_1,H_2\in\mathcal{C}$. It suffices to prove that the identity map $\mathrm{id}_{\mathcal{C}}:\mathcal{C}\longrightarrow\mathcal{C}$ is a variety isomorphism, where the domain (resp. codomain) $\mathcal{C}$ carries the unique variety structure for which $\phi_{H_1}$ (resp. $\phi_{H_2}$) is an isomorphism. Let $\psi:G/H_1\longrightarrow G/H_2$ be the canonical $G$-variety isomorphism \eqref{Equation: Clear}. We have $\mathrm{id}_{\mathcal{C}}=\phi_{H_2}\circ\psi\circ\phi_{H_1}^{-1}$, completing the proof. 
	\end{proof}
	
	\subsection{The regular Poisson variety $\mathcal{U}_{\mathcal{C}}$}\label{Subsection: The variety} Let $\mathcal{C}$ be a self-normalizing conjugacy class of closed subgroups of $G$. Consider the set
	\begin{equation}\label{Equation: Major definition}\mathcal{U}_{\mathcal{C}}\coloneqq\{(H,\xi)\in\mathcal{C}\times\g^*:\xi\in[\h,\h]^{\circ}\},\end{equation} where $\mathfrak{h}\subset\g$ denotes the Lie algebra of $H\in\mathcal{C}$. Note that $\mathcal{U}_{\mathcal{C}}$ is invariant under the diagonal action of $G$ on $\mathcal{C}\times\mathfrak{g}^*$. We also observe that the map
	\begin{equation}\label{Equation: Moment condition}\mu_{\mathcal{C}}:\mathcal{U}_{\mathcal{C}}\longrightarrow\g^*,\quad (H,\xi)\mapsto\xi\end{equation} is $G$-equivariant.
	
	Suppose that $H\in\mathcal{C}$. Recall from Proposition \ref{Proposition: Prelim} that $\mathcal{U}_{G/H}=G\times_H[\h,\h]^{\circ}$ is a Poisson Hamiltonian $G$-variety with moment map $\mu_{G/H}:\mathcal{U}_{G/H}\longrightarrow\g^*$, where $\mu_{G/H}$ is defined in \eqref{Equation: Moment map definition}. Using the fact that $H$ is self-normalizing, a straightforward exercise reveals that
	\begin{equation}\label{Equation: Equivariant bijection}\psi_H:\mathcal{U}_{G/H}\longrightarrow\mathcal{U}_{\mathcal{C}},\quad [g:\xi]\mapsto(gHg^{-1},\mathrm{Ad}_g^*(\xi))\end{equation} is a $G$-equivariant bijection. It is clear that the diagram
	\begin{equation}\label{Equation: Obvious CD}\begin{tikzcd}
			\mathcal{U}_{G/H}\arrow[r, "\psi_H", "\cong"'] \arrow[dr, swap, "\mu_{G/H}"] & \mathcal{U}_{\mathcal{C}} \arrow[d, "\mu_{\mathcal{C}}"] \\
			& \g^*
	\end{tikzcd}\end{equation} commutes.
	
	\begin{theorem}\label{Proposition: New}
		Let $\mathcal{C}$ be a self-normalizing conjugacy class of closed subgroups of $G$.
		\begin{itemize}
			\item[\textup{(i)}] There exists a unique Poisson variety structure on $\mathcal{U}_{\mathcal{C}}$ such that \eqref{Equation: Equivariant bijection} is a Poisson variety isomorphism for all $H\in\mathcal{C}$.
			\item[\textup{(ii)}] The Poisson variety $\mathcal{U}_{\mathcal{C}}$ in \textup{(i)} is regular of rank $2\dim\mathcal{C}$.
			\item[\textup{(iii)}] The $G$-action on $\mathcal{U}_{\mathcal{C}}$ is algebraic and Hamiltonian with respect to the Poisson variety structure in \textup{(i)}.
			\item[\textup{(iv)}] The map $\mu_{\mathcal{C}}$ is a moment map for the Hamiltonian $G$-action in \textup{(iii)}.
		\end{itemize}
	\end{theorem}
	
	\begin{proof}
		Assume that (i) holds. Parts (ii), (iii), and (iv) then follow immediate from Proposition \ref{Proposition: Prelim}(ii), Proposition \ref{Proposition: Prelim}(iii), and the commutativity of \eqref{Equation: Obvious CD}, respectively. It therefore suffices to prove (i). The uniqueness assertion in (i) follows from the fact that \eqref{Equation: Equivariant bijection} is a bijection. To establish existence, suppose that $H_1,H_2\in\mathcal{C}$. It suffices to prove that \begin{equation}\psi_{H_2}^{-1}\circ\psi_{H_1}:\mathcal{U}_{G/H_1}\longrightarrow\mathcal{U}_{G/H_2}\end{equation} is a Poisson variety isomorphism. 
		
		Choose $h\in G$ satisfying $H_2=hH_1h^{-1}$. One finds that 
		\begin{equation}(\psi_{H_2}^{-1}\circ\psi_{H_1})([g:\xi])=[gh^{-1}:\mathrm{Ad}_h^*(\xi)]\end{equation} for all $[g:\xi]\in\mathcal{U}_{G/H_1}$. We have $[H_2,H_2]=h[H_1,H_1]h^{-1}$, implying that \begin{equation}G/[H_1,H_1]\longrightarrow G/[H_2,H_2]\quad [g]\longrightarrow[gh^{-1}]\end{equation} is a variety isomorphism. As such, it induces an isomorphism of cotangent bundles $T^*(G/[H_1,H_1])\overset{\cong}\longrightarrow T^*(G/[H_2,H_2])$. This isomorphism is a symplectomorphism. On the other hand, use \eqref{Equation: Statement} to write $T^*(G/[H_1,H_1])=G\times_{[H_1,H_1]}[\h_1,\h_1]^{\circ}$ and $T^*(G/[H_2,H_2])=G\times_{[H_2,H_2]}[\h_2,\h_2]^{\circ}$. The aforementioned symplectomorphism is given by
		\begin{equation}\label{Equation: New cotangent isomorphism}T^*(G/[H_1,H_1])\overset{\cong}\longrightarrow T^*(G/[H_2,H_2]),\quad [g:\xi]\mapsto [gh^{-1}:\mathrm{Ad}_{h}^*(\xi)].\end{equation} We therefore have a commutative diagram
		\begin{equation}\begin{tikzcd}
				T^*(G/[H_1,H_1])\arrow[r, "\eqref{Equation: New cotangent isomorphism}"] \arrow[d] & T^*(G/[H_2,H_2]) \arrow[d]\\
				\mathcal{U}_{G/H_1} \arrow[r, swap, "\psi_{H_2}^{-1}\circ\psi_{H_1}"] & \mathcal{U}_{G/H_2}
			\end{tikzcd},\end{equation}
		where the left and right vertical maps are the quotients by $A(H_1)$ and $A(H_2)$, respectively. This combines with Proposition \ref{Proposition: Prelim}(i) and the fact that \eqref{Equation: New cotangent isomorphism} is a symplectomorphism to imply that $\psi_{H_2}^{-1}\circ\psi_{H_1}$ is a Poisson variety isomorphism.  
	\end{proof}
	
	\section{Example: Conjugacy classes of normalizers of symmetric subgroups}\label{Section: Symmetric} We now witness examples of Theorem \ref{Proposition: New}. In Subsection \ref{Subsection: New assumptions}, we outline the new Lie-theoretic assumptions made for the balance of this manuscript. This creates context for Subsection \ref{Subsection: Conjugacy classes involutive}, where we associate a self-normalizing conjugacy class of closed subgroups to a conjugacy class of involutive automorphisms of $G$. Concrete applications to regular Poisson varieties are discussed in Subsection \ref{Subsection: Explicit examples}.
	
	\subsection{New assumptions}\label{Subsection: New assumptions} For the duration of this manuscript, $G$ is a connected semisimple affine algebraic group. These hypotheses imply that the Killing form is non-degenerate. We use this fact to freely identify $\g$ and $\g^*$. We also write $V^{\perp}\subset\g$ for the annihilator of a subspace $V\subset\g$ under the Killing form. Note that the identification of $\g$ with $\g^*$ identifies $V^{\perp}\subset\g$ with $V^{\circ}\subset\g^*$.
	
	\subsection{Conjugacy classes of involutive automorphisms}\label{Subsection: Conjugacy classes involutive}
	Let $\mathrm{Aut}(G)$ denote the set of algebraic group automorphisms of $G$. Note that $G$ acts on $\mathrm{Aut}(G)$ by \begin{equation}\label{Equation: Abstract action}(g\cdot\sigma)(h)=g\sigma(g^{-1}hg)g^{-1}\end{equation} for all $g,h\in G$ and $\sigma\in\mathrm{Aut}(G)$. We call an orbit of this action a \textit{conjugacy class of automorphisms}. In other words, two automorphisms lie in the same conjugacy class if and only if they agree up to conjugation by an inner automorphism. The subset of involutions in $\mathrm{Aut}(G)$ is a union of conjugacy classes. We call a conjugacy class $\mathcal{I}$ in this union \textit{involutive}. 
	
	Let $\mathcal{I}\subset\mathrm{Aut}(G)$ be an involutive conjugacy class. Assume that $G$ is simply-connected. By Example \ref{Example: Normalizers}, $N_G(G^{\sigma})\subset G$ is self-normalizing and such that $N_G(G^{\sigma})/G^{\sigma}$ is finite for all $\sigma\in\mathcal{I}$. It follows that 
	\begin{equation}\mathcal{C}_{\mathcal{I}}\coloneqq\{N_G(G^{\sigma}):\sigma\in\mathcal{I}\}\end{equation} is a self-normalizing conjugacy class of closed subgroups of $G$. Set $\mathcal{U}_{\mathcal{I}}\coloneqq\mathcal{U}_{\mathcal{C}_{\mathcal{I}}}$ and note that
	\begin{equation}\mathcal{U}_{\mathcal{I}}=\{(H,x)\in\mathcal{C}_{\mathcal{I}}\times\g:x\in[\h,\h]^{\perp}\},\end{equation} where $\h\subset\g$ is the Lie algebra of $H\in\mathcal{C}_{\mathcal{I}}$. The Poisson Hamiltonian $G$-variety structure on $\mathcal{U}_{\mathcal{I}}$ is explained in Theorem \ref{Proposition: New}. In what follows, we discuss a concrete example.
	
	\subsection{Concrete examples}\label{Subsection: Explicit examples} In this subsection, $G=\operatorname{SL}_n$ for $n\geq 2$. Note that $(\operatorname{SL}_n)^{\sigma}=\operatorname{SO}_n$ for the involutive automorphism \begin{equation}\sigma:\operatorname{SL}_n\longrightarrow\operatorname{SL}_n,\quad g\mapsto (g^T)^{-1}.\end{equation} Let $\mathcal{I}\subset\mathrm{Aut}(\operatorname{SL}_n)$ denote the conjugacy class of $\sigma$. Since $N_{\operatorname{SL}_n}(\operatorname{SO}_n)/\operatorname{SO}_n$ is finite, one has $\h=[\h,\h]\cong\mathfrak{so}_n$ for every $H\in\mathcal{C}_{\mathcal{I}}$ with Lie algebra $\h\subset\g$. This makes it clear that
	\begin{equation}\mathcal{U}_{I}=\{(H,x)\in\mathcal{C}_{\mathcal{I}}\times\mathfrak{sl}_n:x\in\h^{\perp}\}.\end{equation} Theorem \ref{Proposition: New} implies that the map \begin{equation}\operatorname{SL}_n\times_{N_{\mathrm{SL_n}}(\operatorname{SO}_n)}\mathfrak{so}_n^{\perp}\longrightarrow\mathcal{U}_{\mathcal{I}},\quad [g:x]\mapsto (gN_{\mathrm{SL_n}}(\operatorname{SO}_n) g^{-1},gxg^{-1})\end{equation} is an isomorphism of Hamiltonian $\operatorname{SL}_n$-varieties. A similar approach applies if $n$ is even and $\mathcal{I}$ is a conjugacy class of a involutive automorphisms of $\operatorname{SL}_n$ that realizes the symplectic group $\operatorname{Sp}_n\subset\operatorname{SL}_n$ as $(\operatorname{SL}_n)^{\sigma}$.
	
	\section{Example: Conjugacy classes of parabolic subgroups}\label{Section: Twisted}
	We now specialize to the case of a conjugacy class $\mathcal{C}$ of parabolic subgroups of $G$. In Subsection \ref{Subsection: Specifics}, we realize simplifications of the Hamiltonian $G$-variety structure on $\mathcal{U}_{\mathcal{C}}$. The partial Grothendieck--Springer resolution $\mu_{\mathcal{C}}:\g_{\mathcal{C}}\longrightarrow\g$ is formally introduced in Subsection \ref{Subsection: Relation}. In Subsection \ref{Subsection: Embedded}, we prove that that the embedding of $\mathcal{U}_{\mathcal{C}}$ into $\g_{\mathcal{C}}$ is on the level of Hamiltonian $G$-varieties.
	
	\subsection{Specifics on $\mathcal{U}_{\mathcal{C}}$}\label{Subsection: Specifics}
	Let $\mathcal{C}$ be a conjugacy class of parabolic subgroups of $G$. As discussed in Example \ref{Example: Parabolic}, $\mathcal{C}$ is self-normalizing. Suppose that $P\in\mathcal{C}$, and let $\p\subset\g$ denote the Lie algebra of $P$. The Killing form induces a $P$-module isomorphism between $[\p,\p]^{\circ}\subset\g^*$ and $[\p,\p]^{\perp}\subset\g$. In this way, \eqref{Equation: Nicest} and \eqref{Equation: Moment condition} become
	\begin{equation}\label{Equation: Nicest2}\mathcal{U}_{G/P}\coloneqq T^*(G/[P,P])/A(P)=G\times_P[\p,\p]^{\perp}\end{equation} and $$\mu_{G/P}:\mathcal{U}_{G/P}\longrightarrow\g,\quad [g:x]\mapsto\mathrm{Ad}_g(x),$$ respectively. The same principle allows us to write \eqref{Equation: Major definition} as \begin{equation}\mathcal{U}_{\mathcal{C}}\coloneqq\{(P,x)\in\mathcal{C}\times\g:x\in[\p,\p]^{\perp}\}.\end{equation} The Hamiltonian $G$-action and moment map on $\mathcal{U}_{\mathcal{C}}$ are then given by
	\begin{equation}g\cdot(P,x)\coloneqq (gPg^{-1},\mathrm{Ad}_g(x)),\quad g\in G,\text{ }(P,x)\in\mathcal{U}_{\mathcal{C}}\end{equation} and \begin{equation}\mu_{\mathcal{C}}:\mathcal{U}_{\mathcal{C}}\longrightarrow\g,\quad (P,x)\mapsto x,\end{equation} respectively. Given $P\in\mathcal{C}$, \eqref{Equation: Equivariant bijection} takes the form 
	\begin{equation}\label{Equation: Exceptional isomorphism}\psi_P:\mathcal{U}_{G/P}\longrightarrow\mathcal{U}_{\mathcal{C}},\quad [g:x]\mapsto (gPg^{-1},\mathrm{Ad}_g(x)).\end{equation}
	
	\subsection{Partial Grothendieck--Springer resolutions}\label{Subsection: Relation}
	Let $\mathcal{C}$ be a conjugacy class of parabolic subgroups of $G$. Consider the closed, $G$-invariant subvariety of $\mathcal{C}\times\g$ defined by
	\begin{equation}\g_{\mathcal{C}}\coloneqq\{(P,x)\in\mathcal{C}\times\g:x\in\p\},\end{equation} where $\p\subset\g$ is the Lie algebra of $P\in\mathcal{C}$. Let us also consider the $G$-equivariant morphism \begin{equation}\varphi_{\mathcal{C}}:\g_{\mathcal{C}}\longrightarrow\g,\quad(P,x)\mapsto x.\end{equation}
	
	\begin{definition}
		The morphism $\varphi_{\mathcal{C}}:\g_{\mathcal{C}}\longrightarrow\g$ is called the \textit{partial Grothendieck--Springer resolution} associated to $\mathcal{C}$.
	\end{definition}
	
	Grothendieck--Springer resolutions are prominent at the interface of Lie theory, Poisson geometry, and geometric representation theory \cite{CM,Leslie,Safronov,Schrader,Chriss}.
	
	The approach used to endow $\mathcal{U}_{\mathcal{C}}$ with a Poisson Hamiltonian $G$-variety structure applies analogously to $\g_{\mathcal{C}}$. To this end, let $P\subset G$ be a parabolic subgroup integrating $\p\in\mathcal{C}$. Write $U(P)\subset P$ for the unipotent radical of $P$, let $\mathfrak{u}(\p)\subset\g$ denote its Lie algebra, and set $L(P)\coloneqq P/U(P)$. It follows that $T^*(G/U(P))$ is a Hamiltonian $(G\times L(P))$-variety. The geometric quotient $T^*(G/U(P))/L(P)$ exists, is smooth, and inherits a Poisson Hamiltonian $G$-variety structure from the symplectic Hamiltonian $G$-variety structure of $T^*(G/U(P))$ (cf. Proposition \ref{Proposition: Prelim}). At the same time, $\p=\mathfrak{u}(\p)^{\perp}$. This fact yields a commutative diagram
	\begin{equation}\label{Equation: CD6}\begin{tikzcd}
			T^*(G/U(P))\ar[-,double line with arrow={-,-}]{r} \arrow[d] & G\times_{U(P)}\p \arrow[d] & \\
			T^*(G/U(P))/L(P)\ar[-,double line with arrow={-,-}]{r} & G\times_P\p \arrow[r, "\cong", "\eqref{Equation: Equivariant bundle2}"'] & \g_{\mathcal{C}}
		\end{tikzcd},\end{equation} in which the horizontal arrow is the $G$-variety isomorphism
	\begin{equation}\label{Equation: Equivariant bundle2}
		T^*(G/U(P))/L(P)=G\times_P\p\overset{\cong}\longrightarrow\g_{\mathcal{C}},\quad [g:x]\mapsto (gPg^{-1},\mathrm{Ad}_g(x)). 
	\end{equation}  
	There is a unique Poisson Hamiltonian $G$-variety structure on $\g_{\mathcal{C}}$ such that \eqref{Equation: Equivariant bundle2} is an isomorphism for all $P\in\mathcal{C}$. The Grothendieck--Springer resolution $\varphi_{\mathcal{C}}:\g_{\mathcal{C}}\longrightarrow\g$ is a moment map for this structure. 
	
	\subsection{A relationship between $\mathcal{U}_{\mathcal{C}}$ and $\g_{\mathcal{C}}$}\label{Subsection: Embedded} Observe that $[\p,\p]^{\perp}\subset\p$ for all parabolic subalgebras $\p\subset\g$. This fact implies that $\mathcal{U}_{\mathcal{C}}$ is a closed subvariety of $\g_{\mathcal{C}}$, where $\mathcal{C}$ is a conjugacy class of parabolic subgroups of $\g$.
	
	\begin{theorem}\label{Proposition: Embedding}
		The inclusion morphism $\mathcal{U}_{\mathcal{C}}\longrightarrow\g_{\mathcal{C}}$ is a $G$-equivariant embedding of Poisson varieties. It also makes the diagram \begin{equation}\label{Equation: CD4}\begin{tikzcd}
				\mathcal{U}_{\mathcal{C}}\arrow[r] \arrow[dr, swap, "\mu_{\mathcal{C}}"] & \g_{\mathcal{C}} \arrow[d, "\varphi_{\mathcal{C}}"] \\
				& \g
		\end{tikzcd}\end{equation}
		commute.
	\end{theorem}
	
	\begin{proof}
		The equivariance and commutativity assertions follow from straightforward calculations. It therefore suffices to prove that $\mathcal{U}_{\mathcal{C}}$ is a union of symplectic leaves of $\g_{\mathcal{C}}$, and that each such leaf is a symplectic leaf of $\mathcal{U}_{\mathcal{C}}$. To this end, choose $\p\in\mathcal{C}$ and let $P\subset G$ be the parabolic subgroup integrating $\p$. We have the commutative diagram \begin{equation}\label{Equation: CD7}\begin{tikzcd}
				\mathcal{U}_{G/P}\ar[-,double line with arrow={-,-}]{r} & G\times_{P}[\p,\p]^{\perp}\arrow[r, "\eqref{Equation: Exceptional isomorphism}", "\cong"'] \arrow[d] & \mathcal{U}_{\mathcal{C}} \arrow[d] \\
				T^*(G/U(P))/L(P)\ar[-,double line with arrow={-,-}]{r} & G\times_P\p \arrow[r, "\cong", "\eqref{Equation: Equivariant bundle2}"'] & \g_{\mathcal{C}}
			\end{tikzcd},\end{equation} where the vertical maps are inclusions. Let us also recall that the horizontal arrows are Poisson variety isomorphisms. It therefore suffices to prove the following: $\mathcal{U}_{G/P}=G\times_P[\p,\p]^{\perp}$ is a union of symplectic leaves of $T^*(G/U(P))/L(P)=G\times_P\p$, and each such leaf is a symplectic leaf with respect to the Poisson structure on $T^*(G/U(P))/L(P)=G\times_P\p$. 
		
		The proof of Theorem \ref{Theorem: Universal} tells us that the symplectic leaves of $\mathcal{U}_{G/P}$ are the symplectic quotients $T^*(G/[P,P])\sll{\psi}A(P)$, where $\psi$ runs over the elements of $\mathfrak{a}(\p)^*=(\p^*)^P$. On the other hand, let $\mathfrak{l}(\p)\coloneqq\p/\mathfrak{u}(\p)$ denote the Lie algebra of $L(P)$. A similar approach reveals that the symplectic leaves of $T^*(G/U(P))/L(P)$ are the symplectic quotients $T^*(G/U(P))\sll{\psi}L(P)$, where $\psi$ runs over the elements of $\mathfrak{l}(\p)^*=\mathrm{ann}_{\p^*}(\mathfrak{u}(\p))$. We also have the quotient group $I(P)\coloneqq [P,P]/U(P)$ and exact sequences
		\begin{equation}\{e\}\longrightarrow U(P)\longrightarrow[P,P]\longrightarrow I(P)\longrightarrow\{e\}\end{equation} and \begin{equation}\{e\}\longrightarrow I(P)\longrightarrow L(P)\longrightarrow \bunderbrace{L(P)/I(P)}{\let\scriptstyle\textstyle\cong A(P)}\longrightarrow \{e\}.\end{equation}
		Given any $\psi\in(\p^*)^P$, it follows that
		\begin{align}T^*(G/[P,P])\sll{\psi}A(P) & = (T^*G\sll{0}[P,P])\sll{\psi}A(P)\\
			& = \bigg((T^*G\sll{0}U(P))\sll{0}I(P)\bigg)\sll{\psi}A(P)\\
			& = T^*(G/U(P))\sll{\psi}L(P).
		\end{align}
		This verifies the last sentence of the previous paragraph, completing the proof.
	\end{proof}
	
	\bibliographystyle{acm} 
	\bibliography{Incidence}
	
\end{document}